\newcommand{\ch}[1]{#1}
\newtheorem{theorem}{Theorem}[section]
\newtheorem{definition}[theorem]{Definition}
\newtheorem{lemma}[theorem]{Lemma}
\newtheorem{corollary}[theorem]{Corollary}
\newtheorem{proposition}[theorem]{Proposition}
\newtheorem*{rem}{Remark}
\newcommand{\eq}{\begin{equation}}
\newcommand{\en}{\end{equation}}
\newcommand{\nn}{\nonumber}
\newcommand{\prob}{\mathbb P}
\newcommand{\expec}{\mathbb E}
\newcommand{\ind}{\mathds 1}
\newcommand{\atanh}{{\rm atanh}}
\newcommand{\dint}{{\rm d}}
\newcommand{\bbR}{\mathbb{R}}
\newcommand{\calT}{\mathcal{T}}
\newcommand{\sss}{\scriptscriptstyle}
\numberwithin{equation}{section}
\title  {Ising models on power-law random graphs}
\author
{
Sander Dommers
\footnote{
  Eindhoven University of Technology, Department of Mathematics and Computer Science,
  P.O.\ Box 513, 5600 MB Eindhoven, The Netherlands.
  E-mail: {\tt s.dommers@tue.nl, rhofstad@win.tue.nl}
}
\and Cristian Giardin\`a
\footnote{
  Modena and Reggio Emilia University, via Allegri 9, 42100 Reggio Emilia, Italy. E-mail: {\tt cristian.giardina@unimore.it}
}
\and Remco van der Hofstad$\ ^*$
}
\date{\today}
\begin{document}

\maketitle

\begin{abstract}
We study a ferromagnetic Ising model on random graphs with a power-law degree distribution and compute the thermodynamic limit of the pressure when the mean degree is finite (degree exponent $\tau>2$), for which the random graph has a tree-like structure. For this, we \ch{closely follow} the analysis by Dembo and Montanari \ch{in} \cite{DemMon10} which assumes finite variance degrees ($\tau>3$)\ch{, adapting it when necessary and also simplifying it when possible}. \ch{Our results also apply in cases where the degree distribution does not obey a power law.}

We further identify the thermodynamic limits of various physical quantities, such as the magnetization and the internal energy.
\end{abstract}

\section{Introduction and results}
\label{sec-intro}
In this article we study the behavior of the Ising model on complex networks. There are many real-world examples of complex networks. In \cite{New03}, Newman divided such networks into four categories: social, information, technological and biological networks. There has been much interest in the functionality of such networks in recent years \cite{AlbBar02,New03,Str01}. The Ising model is a paradigm model in statistical physics for cooperative behavior \cite{Nis05,Nis09}.

De Sanctis and Guerra studied this model on Erd{\H o}s-R\'enyi random graphs in the high and zero temperature regime \cite{SanGue08}. In \cite{DemMon10}, Dembo and Montanari study a ferromagnetic Ising model on locally tree-like graphs, where they assume that the degree distribution of the graph has finite variance. The Ising model on the $k$-regular graph where there is no external magnetic field is studied in more detail in~\cite{MonMosSly09}. \ch{In this paper, the Gibbs measures are studied and it is proved that they converge to a symmetric linear combination of the plus and the minus Gibbs measure, while other Gibbs measures (of which there are uncountably many) are not seen.}

Many real-life networks are reported to have an infinite variance degree distribution (see e.g. \cite{New03}) and, therefore, it is interesting to generalize the analysis of the Ising model on random graphs to this setting. In this article we shall extend and simplify the analysis in \cite{DemMon10} to the case where the variances of the degrees are infinite, but their means remain finite. In particular, we shall prove that the explicit expression for the pressure found in~\cite{DemMon10} remains valid in the case of infinite variance degrees.

This research fits into a general effort to study the relation of the topology of networks and the behavior of processes on them. An overview of results by physicists can for example be found in \cite{DorGolMen08}. Also mathematically rigorous results for processes on power-law random graphs were published recently, for example for the contact process \cite{ChaDur09} and first passage percolation \cite{BhaHofHoo10}.

In this section we will first define the model and then state our main results. Furthermore we will discuss these results and give an overview of the proof. The remainder of the proof can be found in the subsequent sections.

\subsection{Model definition}\label{sec-infmean}
We start by defining Ising models on finite graphs. Consider a random graph sequence $\{G_n\}_{n \geq 1}$, where $G_n=(V_n,E_n)$, with vertex set $V_n=[n] \equiv \{1,\ldots,n\}$ and some random edge set $E_n$. To each vertex $i\in [n]$ we assign an Ising spin $\sigma_i = \pm 1$. A configuration of spins is denoted by $\sigma=\{\sigma_i : i\in [n]\}$. The {\em Ising model on $G_n$} is then defined by the Boltzmann distribution \eq\label{eq-boltzmann}
\mu_n(\sigma) = \frac{1}{Z_n(\beta, \underline{B})} \exp \left\{\beta \sum_{(i,j) \in E_n} \sigma_i \sigma_j + \sum_{i \in [n]} B_i \sigma_i\right\}.
\en
Here, $\beta \geq 0$ is the inverse temperature and $\underline{B}=\{B_i : i \in [n]\} \in \bbR^n$ is the vector of external magnetic fields. We will write $B$ instead of $\underline{B}$ for a uniform external field, i.e., $B_i=B$ for all $i\in[n]$. The partition function $Z_n(\beta,\underline{B})$ is \ch{the} normalization factor:
\eq
Z_n(\beta,\underline{B}) = \sum_{\sigma \in \{-1,+1\}^n} \exp \left\{\beta \sum_{(i,j) \in E_n} \sigma_i \sigma_j + \sum_{i \in [n]} B_i \sigma_i\right\}.
\en
We let $\big<\cdot\big>_\mu$ denote the expectation with respect to the Ising measure $\mu$, i.e., for every bounded function $f: \{-1,+1\}^n \rightarrow \bbR$,
\eq
\big<f(\sigma)\big>_{\mu_n} =  \sum_{\sigma \in \{-1,+1\}^n} f(\sigma) \mu_n(\sigma).
\en
The main quantity we shall study is the {\em pressure} per particle, which is defined as
\eq
\psi_n(\beta, B) = \frac{1}{n} \log Z_n(\beta, B),
\en
in the thermodynamic limit of $n \rightarrow \infty$.

We shall assume that the graph sequence $\{G_n\}_{n\geq1}$ is {\em locally like a homogeneous random tree}, {\em uniformly sparse} and has a {\em degree distribution} with {\em strongly finite mean}. We make these assumptions precise below, but we shall first introduce some notation.

For a probability distribution over the non-negative integers $P=\{P_k : k\geq 0\}$ we define its {\em size-biased law} $\rho=\{\rho_k : k\geq 0\}$ by
\eq \label{eq-defrho}
\rho_k = \frac{(k+1) P_{k+1}}{\overline{P}},
\en
where $\overline{P}=\sum_{k \geq 0} k P_k$ is the expected value of $P$. Similarly, we write $\overline{\rho}=\sum_{k \geq 0} k \rho_k$ for the expected value of $\rho$. The random rooted tree $\calT(P,\rho,\ell)$ is a branching process with $\ell$ generations, where the root offspring has distribution $P$ and the vertices in each next generation have offsprings that are {\em independent and identically distributed} (i.i.d.) with distribution $\rho$. \ch{We write $\prob$ for the law of $\calT(P,\rho,\infty)$ and write} $\calT(\rho,\ell)$ when the offspring at the root also has \ch{distribution} $\rho$.

We write that an event $\mathcal{A}$ holds \emph{almost surely} (a.s.) if $\prob[\mathcal{A}]=1$.
The ball of radius $r$ around vertex $i$, $B_i(r)$, is defined as the graph induced by the vertices at graph distance at most $r$ from vertex $i$.
For two rooted trees $\calT_1$ and $\calT_2$, we write that $\calT_1 \simeq \calT_2$, when \ch{there exists a bijective map from the vertices of $\calT_1$ to those of $\calT_2$ that preserves the adjacency relations.}

\begin{definition}[Local convergence to homogeneous trees]\label{ass-convtree} Let $\prob_n$ denote the law induced on the ball $B_i(t)$ in $G_n$ centered at a uniformly chosen vertex $i\in[n]$. We say that the graph sequence $\{G_n\}_{n\geq 1}$ is {\em locally tree-like} with asymptotic degree distribution $P$ when, for any rooted tree $\calT$ with $t$ generations, we have that, a.s.,
\eq
\lim_{n\rightarrow\infty} \prob_n [B_i(t) \simeq \calT] = \prob[\calT(P,\rho,t) \simeq \calT].
\en
\end{definition}
Note that this implies that the degree of a uniformly chosen vertex of the graph has asymptotic law $P$. In \cite{DemMon10}, it is assumed that the asymptotic degree distribution $P$ has finite variance. This is not a necessary condition and we shall prove that it is sufficient to assume that the degree distribution has a finite $(1+\varepsilon)$-th moment for some $\varepsilon>0$:
\begin{definition}[Strongly finite mean degree distribution]\label{ass-degdist} We say that the degree distribution $P$ has {\em strongly finite mean} when there exist constants \ch{$\tau>2$} and $c>0$ such that
\eq \label{eq-ppowerlaw}
\sum_{i=k}^{\infty} P_i \leq c k^{-(\tau-1)}.
\en
\end{definition}
\ch{For technical reasons, we will assume, without loss of generality, that $\tau\in(2,3)$ in the rest of the paper.}
Note that all distributions $P$ where
\eq
\sum_{i=k}^{\infty} P_i = c k^{-(\tau-1)} L(k),
\en
for $c>0, \tau>2$ and some slowly varying function $L(k)$, have {\em strongly finite mean}, because by Potter's theorem (\cite[Lemma~2, p.277]{Fel71}) any slowly varying function $L(k)$ can be bounded above and below by an arbitrary small power of $k$. Also distributions which have a lighter tail than a power law, e.g. the Poisson distribution, have {\em strongly finite mean}.

\begin{definition}[Uniform sparsity]\label{ass-unisparse}
We say that the graph sequence $\{G_n\}_{n \geq 1}$ is {\em uniformly sparse} when, a.s.,
\eq
\lim_{\ell\rightarrow\infty} \limsup_{n\rightarrow\infty} \frac{1}{n} \sum_{i \in [n]} D_i \ind_{\{ D_i \geq \ell\}} = 0,
\en
where $D_i$ is the degree of vertex $i$ in $G_n$ and $\ind_{\mathcal{A}}$ denotes the indicator of the event $\mathcal{A}$.
\end{definition}
An immediate consequence of the local convergence and the uniform sparsity condition is, that, a.s.,
\begin{align}
\lim_{n\rightarrow\infty}\frac{|E_n|}{n} &=\lim_{n\rightarrow\infty} \frac{1}{2 n} \sum_{i\in[n]} \sum_{k=1}^\infty k \ind_{\{D_i=k\}}
=\frac12 \lim_{\ell \rightarrow \infty} \lim_{n\rightarrow\infty} \left(\sum_{k=1}^{\ell-1} k \frac{\sum_{i \in [n]} \ind_{\{D_i=k\}}}{n} + \frac{1}{n}\sum_{i\in[n]} D_i \ind_{\{D_i \geq \ell\}}\right) \nn\\
&= \frac12 \lim_{\ell \rightarrow \infty} \sum_{k=1}^{\ell-1} k P_k = \overline{P}/2<\infty.\label{eq-onedges}
\end{align}

\subsection{Main results}
We first investigate the thermodynamic limit of the pressure:
\begin{theorem}[Thermodynamic limit of the pressure]\label{thm-pressure}
Assume that the random graph sequence $\{G_n\}_{n\geq1}$ is locally tree-like with asymptotic degree distribution $P$, where $P$ has strongly finite mean, and is uniformly sparse. Then, for all $0\leq \beta <\infty$ and all $B\in\bbR$, the thermodynamic limit of the pressure exists, a.s., and equals
\eq
\lim_{n \rightarrow \infty} \psi_n(\beta,B) = \varphi(\beta, B),
\en
where, for $B<0$, $\varphi(\beta, B) = \varphi(\beta, -B)$, $\varphi(\beta, 0) = \lim_{B\downarrow 0}\varphi(\beta, B)$ and, for $B>0$,
\begin{align}
\varphi(\beta,B) =& \frac{\overline{P}}{2} \log \cosh(\beta) - \frac{\overline{P}}{2} \expec[ \log(1+\tanh(\beta)\tanh(h_1)\tanh(h_2))] \nn\\
&\qquad + \expec\left[\log\left(e^B \prod_{i=1}^{L} \{1+\tanh(\beta)\tanh(h_i)\} + e^{-B} \prod_{i=1}^{L} \{1-\tanh(\beta)\tanh(h_i)\} \right)\right], \label{eq-pressure}
\end{align}
where
\begin{itemize}
\item[(i)]
$L$ has distribution $P$;

\item[(ii)]
$\{h_i\}_{i\geq1}$ are i.i.d.\ copies of the fixed point $h^*=h^*(\beta,B)$ of the distributional recursion
\eq \label{eq-recursion}
h^{(t+1)} \stackrel{d}{=} B + \sum_{i=1}^{K_t} \atanh (\tanh(\beta)\tanh(h_i^{(t)})),
\en
where $h^{(0)} \equiv B$, $\{K_t\}_{t \geq 1}$, are i.i.d.\ with distribution $\rho$
and $\{h_i^{(t)}\}_{i\geq1}$ are i.i.d.\ copies of $h^{(t)}$ independent of $K_t$;

\item[(iii)]
$L$ and $\{h_i\}_{i\geq1}$ are independent.
\end{itemize}
\end{theorem}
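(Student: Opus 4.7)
The strategy is to follow Dembo and Montanari \cite{DemMon10}: combine a local (tree) cavity computation of the pressure with the local weak convergence of $G_n$ to the random tree $\calT(P,\rho,\infty)$, using the Griffiths--Kelly--Sherman (GKS) correlation inequalities to transport tree identities to $G_n$. Wherever the argument of \cite{DemMon10} relies on $\expec[D^2]<\infty$, I would replace it by a truncation bound that uses only the strongly finite mean hypothesis together with uniform sparsity.

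\emph{Step 1 (tree formula via cavity fields).} I would first reduce the pressure to an edge-correlation integral by differentiating in $\beta$:
\begin{equation}
\psi_n(\beta,B) - \psi_n(0,B) = \int_0^\beta \frac{1}{n} \sum_{(i,j)\in E_n} \langle \sigma_i \sigma_j \rangle_{\beta',B}\, d\beta'.
\end{equation}
On a tree the correlation $\langle \sigma_i \sigma_j\rangle$ has the explicit cavity form in terms of the ``removed-edge'' effective fields $h_i^{(j)}$, which on the random tree $\calT(P,\rho,\infty)$ satisfy the distributional recursion \eqref{eq-recursion}. Starting the iteration from the extremal initial conditions $h^{(0)}\equiv +\infty$ and $h^{(0)}\equiv -\infty$, GKS monotonicity provides two distributional fixed points $h^+\geq h^-$; for $B>0$ a monotonicity-in-$B$ argument forces $h^+=h^-=h^*(\beta,B)$, giving the fixed point of the theorem, and the $B=0$ case follows by taking $B\downarrow 0$.

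\emph{Step 2 (transfer to $G_n$).} For a vertex $i\in [n]$ whose ball $B_i(t)$ is a tree, GKS allows one to sandwich $\langle \sigma_i\sigma_j\rangle_{G_n}$ between its values on $B_i(t)$ with free and with $+$ boundary conditions, both of which converge to the same fixed-point cavity value as $t\to\infty$. Averaging over $i\in[n]$ and using the locally tree-like property (Definition~\ref{ass-convtree}) turns the edge-correlation sum into an expectation over $\calT(P,\rho,\infty)$ and the law of $h^*$. Integrating back in $\beta$ produces a Bethe-type identity equivalent to \eqref{eq-pressure}, decomposed as a $\log\cosh\beta$ term, an edge-correction expectation, and a vertex expectation weighted by $L\sim P$.

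\emph{Step 3 (infinite variance --- the main obstacle).} The only real difference with \cite{DemMon10} lies in the interchange of the $n\to\infty$ limit with the vertex average, because the per-vertex contribution is bounded pointwise only by $C(1+D_i)$ and the infinite variance of $P$ forbids the second-moment uniform integrability argument of \cite{DemMon10}. I would therefore truncate at degree $\ell$: the contribution of vertices with $D_i<\ell$ converges by bounded local weak convergence, while the tail $n^{-1}\sum_{i\in[n]} D_i\, \ind_{\{D_i\geq \ell\}}$ vanishes as $\ell\to\infty$ by uniform sparsity (Definition~\ref{ass-unisparse}). The strongly finite mean bound $\sum_{i\geq k}P_i\leq ck^{-(\tau-1)}$ provides the quantitative tail rate needed for $\rho$, and simultaneously ensures that $h^*$ has a finite $(1+\varepsilon)$-moment for $\varepsilon=\tau-2$, so that the expectations in \eqref{eq-pressure} are finite and all the interchanges of limits are justified.
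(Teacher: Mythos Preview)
Your overall architecture---integrate $\psi_n$ in $\beta$, reduce the resulting edge-correlation average to a tree computation via local weak convergence and GKS sandwiching, then integrate back---is exactly the route the paper takes. But you have misdiagnosed where the infinite-variance hypothesis bites, and you have skipped the one step that actually requires new work.

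First, your Step~3 is not where the difficulty lies. The transfer of $\frac{1}{n}\sum_{(i,j)\in E_n}\langle\sigma_i\sigma_j\rangle$ to the tree (your Step~2) uses only $|\langle\sigma_i\sigma_j\rangle|\le 1$ together with uniform sparsity and local tree-likeness; this is already how \cite{DemMon10} argues, and the paper explicitly says this step goes through unchanged. No second-moment bound is used there, and the truncation you describe is precisely the uniform sparsity condition, not a new ingredient.

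Second, and more seriously, you have not addressed the implicit $\beta$-dependence of $h^*$. Your sentence ``Integrating back in $\beta$ produces a Bethe-type identity equivalent to \eqref{eq-pressure}'' hides the real issue: the formula \eqref{eq-pressure} involves $h^*=h^*(\beta,B)$, so identifying $\partial_\beta\varphi$ with the tree edge-correlation $\frac{\overline P}{2}\,\expec[\langle\sigma_1\sigma_2\rangle_{\nu'_2}]$ requires showing that the contribution from differentiating through $h^*$ vanishes. In the paper this is Proposition~\ref{prop-phiindeph}, which proves $|\varphi_{h_1^*}-\varphi_{h_2^*}|\le\lambda|\beta_1-\beta_2|^{\tau-1}$; since $\tau>2$ this is $o(|\beta_1-\beta_2|)$ and therefore does not affect the derivative. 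It is precisely here---not in the graph-to-tree transfer---that the strongly finite mean hypothesis is used: the proof splits on $\{L\le\theta\}$ versus $\{L>\theta\}$ and balances the tail bound $\prob[L>\theta]\le c\theta^{-(\tau-1)}$ against the second-order error, optimizing over $\theta$. Your proposal contains no trace of this argument.

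Third, your uniqueness argument for $h^*$ is too thin. Saying that a ``monotonicity-in-$B$ argument forces $h^+=h^-$'' is not enough: what is needed is that the root magnetization with $+$ boundary and with free boundary coincide in the limit $\ell\to\infty$. In \cite{DemMon10} this step uses $\overline{\rho}<\infty$ (finite variance of $P$), which fails here. The paper replaces it by an almost-sure bound $m^{\ell,+}-m^{\ell,f}\le M/\ell$ (Lemma~\ref{lem-45}), obtained via the GHS concavity inequality and working with the actual offspring numbers rather than their mean. This is the second place where a genuine modification of \cite{DemMon10} is required, and your sketch does not supply it.
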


The quantity $\varphi(\beta, B)$ can be seen as the infinite volume pressure of the Ising model on the random Bethe lattice, where every vertex has degree distributed as $P$ (cf.~\cite{Bax82} where the Ising model on the regular Bethe lattice is studied).

Various thermodynamic quantities can be computed by taking the proper derivative of the function $\varphi(\beta,B)$ as we shall show in the next theorem.
\begin{theorem}[Thermodynamic quantities]\label{thm-thermqphi}
Assume that the random graph sequence $\{G_n\}_{n\geq1}$ is locally tree-like with asymptotic degree distribution $P$, where $P$ has strongly finite mean, and is uniformly sparse. Then, for all $\beta\geq0$ and $B\neq0$, each of the following statements holds a.s.:
\begin{description}
\item[(a) Magnetization.]
Let $M_n(\beta,B) = \frac{1}{n} \sum_{i\in[n]} \left<\sigma_i\right>_{\mu_n}$ be the {\em magnetization} per vertex. Then, its thermodynamic limit exists and is given by
\eq
M(\beta, B) \equiv \lim_{n\rightarrow \infty} M_n(\beta,B) = \frac{\partial}{\partial B} \varphi(\beta,B).
\en

\item[(b) Internal energy.]
Let $U_n(\beta,B) = -\frac{1}{n} \sum_{(i,j)\in \ch{E_n}} \left<\sigma_i \sigma_j\right>_{\mu_n}$ be the {\em internal energy} per vertex. Then, its thermodynamic limit exists and is given by
\eq
U(\beta,B) \equiv \lim_{n\rightarrow \infty} U_n(\beta,B) = - \frac{\partial}{\partial \beta}\varphi(\beta, B).
\en

\item[(c) Susceptibility.] Let $\chi_n(\beta,B) = \frac{1}{n} \sum_{\ch{i,j\in [n]}}\left( \left<\sigma_i \sigma_j\right>_{\mu_n} -\left<\sigma_i\right>_{\mu_n}\left<\sigma_j\right>_{\mu_n}\right) = \frac{\partial M_n}{\partial B}(\beta,B)$ be the {\em susceptibility}. Then, its thermodynamic limit exists and is given by
\eq
\chi(\beta,B) \equiv \lim_{n\rightarrow \infty} \chi_n(\beta,B) = \frac{\partial^2}{\partial B^2}\varphi(\beta, B).
\en
\end{description}
\end{theorem}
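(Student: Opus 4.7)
The strategy is to use convexity of $\psi_n$ in each variable, combined with the a.s.\ pointwise convergence $\psi_n(\beta,B)\to\varphi(\beta,B)$ from Theorem~\ref{thm-pressure}. First I would record the standard finite-$n$ identities obtained by differentiating $\log Z_n$:
\begin{align*}
\frac{\partial\psi_n}{\partial B}(\beta,B) &= \frac{1}{n}\sum_{i\in[n]}\langle\sigma_i\rangle_{\mu_n} = M_n(\beta,B),\\
\frac{\partial\psi_n}{\partial\beta}(\beta,B) &= \frac{1}{n}\sum_{(i,j)\in E_n}\langle\sigma_i\sigma_j\rangle_{\mu_n} = -U_n(\beta,B),\\
\frac{\partial^2\psi_n}{\partial B^2}(\beta,B) &= \frac{1}{n}\Var_{\mu_n}\!\Big(\sum_{i\in[n]}\sigma_i\Big) = \chi_n(\beta,B),
\end{align*}
from which it is immediate that $\psi_n$ is convex in $B$, and an analogous variance identity gives convexity in $\beta$.

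For parts (a) and (b), I would invoke the classical lemma that pointwise convergence of convex functions on an open interval forces convergence of one-sided derivatives and, at every point where the limit is differentiable, convergence of the derivatives themselves. Combined with Theorem~\ref{thm-pressure}, this reduces both statements to showing that $\varphi(\beta,\cdot)$ is differentiable at every $B\neq 0$ and that $\varphi(\cdot,B)$ is differentiable in $\beta$. Convexity alone already gives differentiability off a countable set, and one expects the only possible loss of differentiability in $B$ to occur at $B=0$. I would establish the required differentiability for $B\neq 0$ via the Lee--Yang theorem: the ferromagnetic Ising partition function $Z_n(\beta,\cdot)$, viewed as a polynomial in $e^{2B}$, has all its zeros on the unit circle, so $\psi_n(\beta,\cdot)$ is real-analytic on $\bbR\setminus\{0\}$ with $|\partial_B\psi_n|=|M_n|\le 1$ uniformly in $n$; passing to the limit preserves differentiability at every such $B$. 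Differentiability in $\beta$ is obtained similarly, and can also be read off the explicit formula~\eqref{eq-pressure} together with continuity of the fixed point $h^*(\beta,B)$.

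For part (c), convexity alone does not yield convergence of second derivatives, so I would argue by analyticity. Lee--Yang provides a common open complex neighborhood $\mathcal{U}$ of any $B_0\neq 0$ on which every $\psi_n(\beta,\cdot)$ is analytic. A uniform bound on $|\psi_n|$ on compact subsets of $\mathcal{U}$ follows from the elementary estimate $|Z_n(\beta,B)|\le Z_n(\beta,\mathrm{Re}\,B)$ together with a Lee--Yang-based lower bound $|Z_n(\beta,B)|\ge c(\beta,B_0)^n$. Vitali's convergence theorem then upgrades the a.s.\ pointwise convergence on the real axis to uniform convergence on compact subsets of $\mathcal{U}$, and Cauchy's formula for the second derivative delivers $\chi_n=\partial_B^2\psi_n\to\partial_B^2\varphi$.

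The main obstacle is the analytic control away from $B=0$, namely extracting a complex neighborhood on which the Lee--Yang zeros stay off and $Z_n$ is uniformly bounded below (so that Vitali applies and $\varphi$ inherits analyticity). The restriction $B\neq 0$ in the statement is precisely what avoids the possible phase transition at zero field; once this restriction is in place, the remainder is standard soft analysis of convex, respectively analytic, function sequences.
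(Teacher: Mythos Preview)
Your approach is correct and, for part~(c) especially, genuinely different from the paper's. For~(a) your convexity argument is essentially the paper's Lemma~\ref{lem-interchange} in disguise (monotone derivative is the same as convexity here); the only difference is how differentiability of the limit $\varphi(\beta,\cdot)$ is supplied---you via Lee--Yang analyticity, the paper via the explicit Bethe formula~\eqref{eq-pressure} and Proposition~\ref{prop-phiindeph}(b). For~(b) the paper does \emph{not} argue by convexity at all: it simply invokes Proposition~\ref{prop-dpressure}, where $\lim_n\partial_\beta\psi_n$ is computed directly from local tree convergence and matched with $\partial_\beta\varphi$ using Proposition~\ref{prop-phiindeph}(a). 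Your convexity route for~(b) also works, but note that Lee--Yang says nothing about the $\beta$-variable, so you correctly fall back on the explicit formula for differentiability in $\beta$---which is precisely the paper's Proposition~\ref{prop-phiindeph}(a). The real divergence is~(c): the paper observes that by the GHS inequality $B\mapsto M_n(\beta,B)$ is \emph{concave} on $\{B>0\}$, so $\partial_B M_n$ is monotone, and then reapplies Lemma~\ref{lem-interchange} one level up (with $f_n=M_n$, $f=M=\partial_B\varphi$). This is a two-line real-variable argument that avoids complex analysis entirely, at the cost of being specific to systems satisfying GHS. Your Lee--Yang/Vitali route is the classical statistical-mechanics argument; it is heavier (you need the uniform lower bound $|Z_n(\beta,B)|\ge c^n$ on complex compacta, which does follow from the circle-theorem factorization but requires some care with branches of $\log$), yet it buys you analyticity of $\varphi$ in $B$ and would extend to models lacking GHS.
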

The limits above hold for $\beta\geq0$ and $B\neq0$. From the physics
literature (see e.g. \cite{DorGolMen02,LeoVazVesZec02})
we expect that this model has a ferromagnetic phase transition at
$\beta_c = \atanh(1/\overline{\rho})$, i.e., the susceptibility
becomes infinite at $\beta=\beta_c$ in $B=0$. For $\beta<\beta_c$ the
functions above are continuous in $B$ for all $B$ and thus the limits
above also hold in this regime.

Another physical quantity studied in the physics literature is the {\em specific heat},
\eq
C_n(\beta,B) \equiv -\beta^2 \frac{\partial U_n}{\partial \beta}.
\en
Unfortunately, we were not able to prove that this converges to $\beta^2 \frac{\partial^2}{\partial \beta^2}\varphi(\beta, B)$, because we do not have convexity or concavity of the internal energy in $\beta$. We expect, however, that this limit also holds.

Taking the derivatives of Theorem~\ref{thm-thermqphi} we can also give explicit expressions for the magnetization and internal energy which have a physical interpretation:
\begin{corollary}[Explicit expressions for thermodynamic quantities]\label{cor-thermq}
Assume that the graph sequence $\{G_n\}_{n\geq1}$ is locally tree-like with asymptotic degree distribution $P$, where $P$ has strongly finite mean, and is uniformly sparse. Then, for all $\beta\geq0$ and $B\in\mathbb{R}$, each of the following statements holds a.s.:
\begin{description}
\item[(a) Magnetization.]
Let $\nu_{L+1}$ be the random Ising measure on a tree with $L+1$ vertices (one root and $L$ leaves), where $L$ has distribution $P$, defined by
\eq \label{eq-defnuL1}
\nu_{L+1}(\sigma) = \frac{1}{Z_{L+1}(\beta,h^*)} \exp\left\{ \beta \sum_{i=1}^L \sigma_0 \sigma_i + B \sigma_0 + \sum_{i=1}^{L} h_i \sigma_i \right\},
\en
where $\{h_i\}_{i\geq1}$ are i.i.d.\ copies of $h^*$, independent of $L$. Then, the thermodynamic limit of the {\em magnetization} per vertex is given by
\eq
M(\beta, B) = \expec\left[\big<\sigma_0\big>_{\nu_{L+1}}\right],
\en
where the expectation is taken over $L$ and $\{h_i\}_{i\geq1}$. More explicitly,
\eq
M(\beta,B) = \expec\left[ \tanh\left(B+\sum_{i=1}^{L} \atanh(\tanh(\beta) \tanh(h_i))\right)\right].
\en

\item[(b) Internal energy.]
Let $\nu'_2$ be the random Ising measure on one edge, defined by
\eq \label{eq-defnu2}
\nu'_2(\sigma) = \frac{1}{Z_2(\beta,h_1,h_2)} \exp\left\{ \beta \sigma_1 \sigma_2 + h_1 \sigma_1 + h_2 \sigma_2\right\},
\en
where $h_1$ and $h_2$ are i.i.d.\ copies of $h^*$. Then the thermodynamic limit of the {\em internal energy} per vertex is given by
\eq
U(\beta,B) = -\frac{\overline{P}}{2} \expec\left[\big<\sigma_1\sigma_2\big>_{\nu'_2}\right],
\en
where the expectation is taken over $h_1$ and $h_2$. More explicitly,
\eq
U(\beta,B) = -\frac{\overline{P}}{2} \expec\left[\frac{\tanh(\beta)+\tanh(h_1)\tanh(h_2)}{1+\tanh(\beta) \tanh(h_1)\tanh(h_2)}\right].
\en
\end{description}
\end{corollary}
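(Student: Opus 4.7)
The plan is to derive both identities from Theorem~\ref{thm-thermqphi}, which identifies $M(\beta,B)$ with $\partial_B\varphi(\beta,B)$ and $U(\beta,B)$ with $-\partial_\beta\varphi(\beta,B)$ for $B\neq 0$, by differentiating the explicit formula \eqref{eq-pressure} and matching the results with the Ising averages on the finite trees $\nu_{L+1}$ and $\nu'_2$. These small measures admit closed forms: one sums out the leaves of \eqref{eq-defnuL1} to reduce to an effective field on the root, and the two-spin correlation on \eqref{eq-defnu2} is elementary.

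The main subtlety, and what I expect to be the hard part, is that $\varphi(\beta,B)$ depends on $(\beta,B)$ not only explicitly but also through the law $\mu^{\ast}=\mu^{\ast}(\beta,B)$ of the fixed point $h^{\ast}$. Writing $F(\beta,B;\mu)$ for the right-hand side of \eqref{eq-pressure} when the $h_i$ are i.i.d.\ with common law $\mu$, so that $\varphi(\beta,B)=F(\beta,B;\mu^{\ast})$, I would first show that the first variation $\delta_\mu F$ vanishes at $\mu=\mu^{\ast}$; concretely, computing the variational derivative and using the edge/vertex bookkeeping $\rho_k=(k+1)P_{k+1}/\overline{P}$ should reduce the vanishing condition to the identity that each $h^{\ast}$-distributed field has the same law as the right-hand side of \eqref{eq-recursion}. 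Granting this, the implicit chain-rule terms drop out of both $\partial_B\varphi$ and $\partial_\beta\varphi$, so one may differentiate \eqref{eq-pressure} treating $\mu^{\ast}$ as frozen.

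Once this is in place, (a) reduces to direct computation. Using the identity $1\pm\tanh\beta\tanh h=\sqrt{1-\tanh^2\beta\tanh^2 h}\,e^{\pm\atanh(\tanh\beta\tanh h)}$, the common prefactors cancel in the log-ratio and one finds $\partial_B\varphi=\expec[\tanh(B+\sum_{i=1}^L\atanh(\tanh(\beta)\tanh(h_i)))]$. Summing out the leaves in \eqref{eq-defnuL1} leaves the root under an effective field $B+\sum_{i=1}^L\atanh(\tanh\beta\tanh h_i)$, so the bracketed expression is exactly $\langle\sigma_0\rangle_{\nu_{L+1}}$, which gives both formulas in (a). For (b), I would differentiate \eqref{eq-pressure} in $\beta$ with $\mu^{\ast}$ frozen, using $\partial_\beta\log\cosh\beta=\tanh\beta$; after straightforward cancellations, the three contributions collapse to $-\tfrac{\overline{P}}{2}\,\expec[(\tanh\beta+\tanh h_1\tanh h_2)/(1+\tanh\beta\tanh h_1\tanh h_2)]$. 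A one-line computation on \eqref{eq-defnu2} identifies the bracketed quantity with $\langle\sigma_1\sigma_2\rangle_{\nu'_2}$, completing (b). The extension from $B\neq 0$ to general $B\in\bbR$ should follow by continuity in $B$ of the right-hand sides.
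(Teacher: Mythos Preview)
Your plan is essentially the paper's approach: both parts come from Theorem~\ref{thm-thermqphi} by differentiating \eqref{eq-pressure} with $h^*$ held fixed, followed by the same explicit manipulations (your identity $1\pm\tanh\beta\tanh h=\sqrt{1-\tanh^2\beta\tanh^2 h}\,e^{\pm\atanh(\tanh\beta\tanh h)}$ is exactly the rewriting the paper uses for (a), and (b) is deduced from Lemma~\ref{lem-dphi}, whose proof is precisely the computation you sketch).

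The one place where your route diverges slightly from the paper is in how you justify freezing $\mu^*$. You propose to show the first variation $\delta_\mu F$ vanishes at $\mu^*$ via the size-bias identity and the fixed-point equation, then invoke a chain rule. The paper does not argue this way directly; instead it invokes Proposition~\ref{prop-phiindeph}, which gives the quantitative bound $|\varphi_{h_1^*}(\beta,B)-\varphi_{h_2^*}(\beta,B)|\le\lambda|\beta_1-\beta_2|^{\tau-1}$ (and likewise in $B$), so that the implicit contribution is $o(|\Delta\beta|)$ and $o(|\Delta B|)$ and drops out of first derivatives without ever needing to make sense of $\partial\mu^*/\partial\beta$ or an infinite-dimensional chain rule. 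Your stationarity insight is not wrong---indeed it is exactly the identity \eqref{eq-ELdF0} that sits at the heart of the proof of Proposition~\ref{prop-phiindeph}---but turning ``$\delta_\mu F|_{\mu^*}=0$ so the chain-rule term vanishes'' into a rigorous statement still requires controlling how $\mu^*$ moves with $(\beta,B)$ (the paper does this via Lemmas~\ref{lem-dephbeta} and~\ref{lem-dephB}) and bounding the second-order remainder (Lemma~\ref{lem-depphih}). So either cite Proposition~\ref{prop-phiindeph} directly, or be prepared to supply those two ingredients; the bare variational identity is the key idea but not quite enough on its own.
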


Note that the magnetization and internal energy are {\em local} observables, i.e., they are {\em spin} or {\em edge} variables averaged out over the graph. This is not true for the susceptibility, which is an average over pairs of spins, and hence we were not able to give an explicit expression for this quantity.

\subsection{Discussion}
We study the Ising model on a random graph, which gives rise to a model with double randomness. Still, in the thermodynamic limit, the pressure is essentially deterministic. This is possible, because it suffices to study the Ising model on the local neighborhood of a uniformly chosen vertex. This local neighborhood converges by our assumptions to the tree $\calT(P,\rho,\infty)$, and it thus suffices to study the Ising model on this limiting object. An analysis of this kind is therefore known as the {\em objective method} introduced by Aldous and Steele in \cite{AldSte04}.

The assumption that the graph converges locally to a homogeneous tree holds in a wide range of random graph models, among which the configuration model and the Erd{\H o}s-R\'enyi random graph, as we now explain.

In the {\em configuration model}, a random graph $G_n$ is constructed as follows. Let $\{D_i\}_{i=1}^n$ be a sequence of i.i.d.\ random variables with a certain {\em degree distribution} $P$. Let vertex $i\in [n]$ be a vertex with $D_i$ half-edges, also called {\em stubs}, attached to it, i.e., vertex $i$ has degree $D_i$. Let $L_n=\sum_{i=1}^n D_i$ be the total degree, which we assume to be even in order to be able to construct a graph. When $L_n$ is odd we will increase the degree of $D_n$ by 1. For $n$ large, this will hardly change the results and we will therefore ignore this effect. Now connect one of the half-edges uniformly at random to one of the remaining $L_n - 1$ half-edges. Repeat this procedure until all half-edges have been connected.

Dembo and Montanari studied a slightly different version of the configuration model in~\cite{DemMon10a}, where the degrees of the vertices are {\em deterministic} instead of random. They prove that in that case, when the empirical degree distribution has finite mean, the graph sequence is also locally tree-like and uniformly sparse. Their proof can easily be adapted to show that this also holds for the above version of the configuration model using the strong law of large numbers.

In~\cite{DemMon10a}, it was also shown that the {\em Erd{\H o}s-R\'enyi} random graph is uniformly sparse and is locally tree-like with an asymptotic degree distribution that is Poisson distributed (and note that for a Poisson distribution $P=\rho$). Therefore, clearly, the Erd{\H o}s-R\'enyi random graph has a finite mean degree distribution.

That these results hold for a wide variety of random graph models is not a surprise. It is believed that the behavior of networks shows a great universality. Distances in random graph models, for example, also show a remarkably universal behavior. See, e.g.,~\cite{HofHoo08} for an overview of results on distances in power-law random graphs. These distances mainly depend on the power-law exponent and not on other details of the graph.
Note, however, that the results above only apply to graphs that converge locally to a {\em homogeneous} tree and thus, for instance, not for many inhomogeneous random graphs studied in \cite{BolJanRio07} where the local structure is a multi-type Galton-Watson branching process instead. Certain parts of our proof easily extend to this case.

In this paper we study {\em smooth} observables, we defer the investigation of the critical nature to a later paper. There we will study the behavior around the critical value $\beta_c$ where certain quantities (e.g. the susceptibility) have singularities. Of special interest is the critical behavior when $\tau \in (2,3)$, where $\beta_c=0$, i.e., where the system is always in the ferromagnetic regime for any finite temperature.

\subsection{Overview of the proof and organization of the paper}
In this section, we give an overview of the proof of Theorem~\ref{thm-pressure}, and reduce it to the proofs of Propositions~\ref{prop-recursion},~\ref{prop-phiindeph} and~\ref{prop-dpressure} below. Proposition~\ref{prop-recursion} establishes that the recursive relation that gives the field acting on the root of the infinite tree $\calT(P,\rho,\infty)$ is well-defined, in the sense that the recursion admits a unique fixed point $h^*$. Proposition~\ref{prop-phiindeph} is instrumental to control the implicit dependence of the pressure of the random Bethe lattice $\varphi(\beta,B)$ on the inverse temperature $\beta$ via the field $h^*$. This is used in Proposition~\ref{prop-dpressure} which proves that the derivative of the pressure with respect to $\beta$, namely minus the {\em internal energy}, converges in the thermodynamic limit to the derivative of $\varphi(\beta,B)$. We also clearly indicate how our proof deviates from that by Dembo and Montanari in \cite{DemMon10}.

We will first analyze the case where $B>0$ and deal with $B\leq0$ later. We start by investigating the distributional recursion~\eqref{eq-recursion}:
\begin{proposition}[Tree recursion]\label{prop-recursion}
Let $B>0$ and let $\{K_t\}_{t \geq 1}$ be i.i.d.\ according to some distribution $\rho$ and assume that $K_1<\infty$, a.s. Consider the sequence of random variables $\{h^{(t)}\}_{t \geq 0}$ defined by $h^{(0)} \equiv B$ and, for $t\geq0$, by~\eqref{eq-recursion}.
Then, the distributions of $h^{(t)}$ are stochastically monotone and $h^{(t)}$ converges in distribution to the {\em unique} fixed point $h^*$ of the recursion~\eqref{eq-recursion} that is supported on $[0,\infty)$.
\end{proposition}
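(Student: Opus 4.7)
I would build the fixed point by monotone iteration from below on a single Galton-Watson tree, and then obtain uniqueness by sandwiching any candidate between two monotone stochastic sequences. Write $f(h):=\atanh(\tanh(\beta)\tanh(h))$ and let $T$ denote the distributional map sending a law $\nu$ on $[0,\infty)$ to the law of $B+\sum_{i=1}^K f(h_i)$ with $K\sim\rho$ and $h_i$ i.i.d.\ $\sim\nu$, independent of $K$. Two structural facts are used repeatedly: $f$ is strictly increasing on $[0,\infty)$ (so $T$ preserves stochastic order via the natural coupling with the same $K$ and a monotone coupling of the $h_i$), and $0\leq f(h)\leq \beta$ uniformly (since $\tanh(\beta)\tanh(h)\leq \tanh(\beta)$).

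I would realize all iterates $\{h^{(t)}\}_{t\geq 0}$ on a common Galton-Watson tree with offspring law $\rho$ by setting $h_v^{(0)}\equiv B$ for all vertices $v$ and iterating $h_v^{(t+1)}=B+\sum_{u\text{ child of }v}f(h_u^{(t)})$, so that $h_\emptyset^{(t)}$ has the law of $h^{(t)}$. Because $B\geq 0$ and $f(B)\geq 0$, $h_v^{(1)}\geq B=h_v^{(0)}$ pointwise, and iterating monotonicity of $f$ yields $h_v^{(t)}\leq h_v^{(t+1)}$ pointwise for every $v,t$; in particular $\{h^{(t)}\}$ is stochastically increasing. The uniform bound $f\leq \beta$ gives $h_\emptyset^{(t)}\leq B+\beta K_\emptyset$ a.s., where $K_\emptyset\sim \rho$ is finite a.s., so the monotone a.s.\ limit $h^*:=\lim_t h_\emptyset^{(t)}$ is finite a.s., yielding convergence in distribution. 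Passing to the limit in the recursion via bounded convergence (a.s.\ finitely many terms, each $\leq \beta$, and $f$ continuous) confirms that $h^*$ is a fixed point of the recursion \eqref{eq-recursion}.

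For uniqueness, let $g^*$ be any fixed point supported on $[0,\infty)$. Non-negativity of the cavity terms gives $g^*\geq B$ a.s., hence $g^*\succeq h^{(0)}$, and iterating $T$ yields $g^*=T^t(g^*)\succeq h^{(t)}\to h^*$, so $g^*\succeq h^*$. To match this from above I would introduce a stochastically decreasing sequence by iterating $T$ from the formal law $\delta_\infty$, explicitly $h^{(1)}_{\mathrm{hi}}:=B+\beta K_0$ and $h^{(t+1)}_{\mathrm{hi}}:=T(h^{(t)}_{\mathrm{hi}})$; the bound $f\leq\beta$ applied to the now-finite $h^{(1)}_{\mathrm{hi}}$ yields $h^{(2)}_{\mathrm{hi}}\preceq h^{(1)}_{\mathrm{hi}}$ in the coupling, and monotonicity of $T$ produces a stochastically decreasing sequence with limit $h^*_{\mathrm{hi}}$. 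The same uniform bound gives $g^*\preceq h^{(1)}_{\mathrm{hi}}$ and hence $g^*\preceq h^*_{\mathrm{hi}}$.

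The only genuinely nontrivial point left is to identify $h^*\stackrel{d}{=}h^*_{\mathrm{hi}}$, and this is where I expect the main obstacle. For $\tau>3$ this is a routine $L^1$-Wasserstein contraction with constant $\overline{\rho}\tanh(\beta)<1$ (as in Dembo-Montanari), but here $\overline{\rho}=\infty$, so that route is unavailable. I would instead exploit the exact identity $\tanh(f(h))=\tanh(\beta)\tanh(h)$, which yields a \emph{pointwise} $\tanh(\beta)$-contraction in $\tanh$-coordinates, combined with the fact that $f'(h)\to 0$ exponentially as $h\to\infty$ and the a.s.\ monotone tree coupling of $h^{(t)}_{\mathrm{lo}}$ and $h^{(t)}_{\mathrm{hi}}$: since $h^{(t)}_{\mathrm{lo},v}$ grows up, the effective Lipschitz constants $f'(\xi_v)$ acting on the gap $\tanh(h^{(t)}_{\mathrm{hi},v})-\tanh(h^{(t)}_{\mathrm{lo},v})$ shrink fast enough along the tree to force the gap to $0$ in the coupling, despite the unbounded offspring counts. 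Making this level-by-level contraction rigorous is the adaptation where the proof genuinely departs from Dembo-Montanari.
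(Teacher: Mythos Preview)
Your monotone sandwich structure is exactly the one the paper uses: build the free-boundary iterates $h^{(t)}$ from below, the $+$-boundary iterates $h^{(t,+)}$ from above (your $h^{(t)}_{\mathrm{hi}}$), and squeeze any candidate fixed point between them. The divergence is entirely in the step you yourself flag as ``the only genuinely nontrivial point'': showing that the upper and lower limits coincide.

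The paper does \emph{not} attempt any contraction argument here. Instead it proves a deterministic bound $m^{\ell,+}(\underline{B})-m^{\ell,f}(\underline{B})\leq M/\ell$ for every realization of the tree (Lemma~\ref{lem-45}), relying only on the GHS inequality. The mechanism is concavity plus telescoping: by GHS, $H\mapsto m^{k-1}(\underline{B},H\{\Delta_i\})$ is concave, which yields $m^{k,+}-m^{k,f}\leq M\big(m^{k,f}-m^{k-1,f}\big)$; summing over $k$ gives a telescoping series bounded by $M$. No moment condition on the offspring distribution is used, only $K<\infty$ a.s., and no tree-indexed products of Lipschitz constants appear.

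Your proposed route has a genuine gap. The identity $\tanh(f(h))=\tanh(\beta)\tanh(h)$ is a contraction of $\tanh$-values under a \emph{single} application of $f$, but the recursion is $h=B+\sum_{i=1}^{K}f(h_i)$: you must pass from the $\tanh(f(h_i))$ back to the $f(h_i)$ themselves (incurring a factor up to $\cosh^2(\beta)$ per child), sum $K$ of them, and then re-apply $\tanh$. The only damping available at the parent is the factor $1-\tanh^2(h_{\mathrm{lo}})$, and one is led to an inequality of the form
\[
D_v^{(t)}\ \leq\ c\,(1-\tanh^2(h^{(t)}_{\mathrm{lo},v}))\sum_{u}D_u^{(t-1)},
\]
whose one-step expected contraction rate is essentially $c\,\expec\big[K\,e^{-2Kf(B)}\big]$. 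This quantity is finite for any a.s.\ finite $K$, but there is no reason for it to be less than $1$ uniformly in $(\beta,B)$; for small $\beta$ one has $f(B)\sim\beta\tanh B$, and heavy-tailed $K$ makes the expectation blow up faster than the prefactor $\tanh\beta$ vanishes. Your appeal to ``$h^{(t)}_{\mathrm{lo},v}$ grows up'' does not help either, since $h^{(t)}_{\mathrm{lo},v}\leq B+\beta K_v$ is bounded by the local degree, not by $t$. So as written, the level-by-level contraction is not established, and I do not see how to complete it without an idea of a different nature. The GHS argument in the paper sidesteps all of this.
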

We can now investigate the thermodynamic limit of the pressure. By the fundamental theorem of calculus,
\begin{align}
\lim_{n\rightarrow\infty} \psi_n(\beta,B)
&= \lim_{n\rightarrow\infty}\left[\psi_n(0,B) +  \int_{0}^{\beta} \frac{\partial}{\partial \beta'}\psi_n(\beta',B) \dint \beta' \right]\nn\\
&= \lim_{n\rightarrow\infty}\left[\psi_n(0,B) +  \int_{0}^{\varepsilon} \frac{\partial}{\partial \beta'}\psi_n(\beta',B) \dint \beta' + \int_{\varepsilon}^{\beta} \frac{\partial}{\partial \beta'}\psi_n(\beta',B) \dint \beta' \right],
\end{align}
for any $0<\varepsilon<\beta$. For all $n \geq 1$, we have that
\eq
\psi_n(0,B) = \log(2 \cosh(B)) = \varphi(0,B),
\en
so this is also true for $n\rightarrow\infty$.

By the uniform sparsity of $\{G_n\}_{n\geq1}$,
\eq
\left|\frac{\partial}{\partial \beta}\psi_n(\beta,B)\right| = \left|\frac{1}{n}\sum_{(i,j)\in E_n} \big<\sigma_i \sigma_j\big>_{\mu_n}\right| \leq {\frac{|E_n|}{n}} \leq c,
\en
for some constant $c$. Thus, uniformly in $n$,
\eq
\left|\int_{0}^{\varepsilon} \frac{\partial}{\partial \beta'}\psi_n(\beta',B) \dint \beta'\right| \leq c \varepsilon.
\en
Using the boundedness of the derivative for $\beta'\in[\varepsilon,\beta]$, we also have that
\eq
\lim_{n\rightarrow\infty} \int_{\varepsilon}^{\beta} \frac{\partial}{\partial \beta'}\psi_n(\beta',B) \dint \beta'
= \int_{\varepsilon}^{\beta} \lim_{n\rightarrow\infty} \frac{\partial}{\partial \beta'}\psi_n(\beta',B) \dint \beta'.
\en
For $\beta>0$, we will show that the partial derivative with respect to $\beta$ of $\psi_n(\beta,B)$ converges to the partial derivative with respect to $\beta$ of $\varphi(\beta,B)$. For this, we need that we can in fact ignore the dependence of $h^*$ on $\beta$ when computing the latter derivative as we shall show first:
\begin{proposition}[Dependence of $\varphi$ on $(\beta,B)$ via $h^*$]\label{prop-phiindeph}
Assume that the distribution $P$ has strongly finite mean. Fix $B_1,B_2>0$ and $0 < \beta_1, \beta_2 <\infty$. Let $h_{1}^*$ and $h_{2}^*$ be the fixed points of \eqref{eq-recursion} for $(\beta_1,B_1)$ and $(\beta_2,B_2)$, respectively. Let $\varphi_{h^*}(\beta,B)$ be defined as in~\eqref{eq-pressure} with $\{h_i\}_{i\geq1}$ replaced by i.i.d.\ copies of the specified $h^*$. Then,
\begin{description}
\item[(a)] For $B_1=B_2$, there exists a $\lambda_1<\infty$ such that
\eq
|\varphi_{h_{1}^*}(\beta_1, B_1) - \varphi_{h_{2}^*}(\beta_1, B_1)| \leq \lambda_1 |\beta_1-\beta_2|^{\tau-1}.
\en
\item[(b)] For $\beta_1=\beta_2$, there exists a $\lambda_2<\infty$ such that
\eq
|\varphi_{h_{1}^*}(\beta_1, B_1) - \varphi_{h_{2}^*}(\beta_1, B_1)| \leq \lambda_2 |B_1-B_2|^{\tau-1}.
\en
\end{description}
\end{proposition}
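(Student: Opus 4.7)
The plan is to reduce the LHS to a Wasserstein-type distance between the laws of $\tanh(h_1^*)$ and $\tanh(h_2^*)$, and then bound that distance via a coupling on a common Galton--Watson tree. Inspecting~\eqref{eq-pressure}, for fixed $(\beta_1,B_1)$ the dependence of $\varphi_{h^*}(\beta_1,B_1)$ on the law of $h^*$ enters only through expectations of functions of $\tanh(h_i)$: the second term is $\expec[G_2(\tanh(h_1),\tanh(h_2))]$ for a bounded $G_2$ Lipschitz in each argument with constant depending only on $\tanh(\beta_1)$, while the third is $\expec[G_3^{(L)}(\tanh(h_1),\ldots,\tanh(h_L))]$ with Lipschitz constant at most $\tanh(\beta_1)/(1-\tanh(\beta_1))$ in each coordinate (a short calculation with the log-derivative of the integrand). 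Swapping the $h_i$'s one at a time and using $\expec[L]=\overline{P}<\infty$ together with independence of the $h_i$'s, this gives an upper bound on the LHS of (a) of the form $C(\beta_1,B_1)\,\expec[|\tanh(h_1^*)-\tanh(h_2^*)|]$, evaluated under any chosen joint law of $(h_1^*,h_2^*)$.

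Next, I would construct a coupling of $h_1^*$ and $h_2^*$ on a single infinite Galton--Watson tree with offspring law $\rho$, sharing the offspring variables $K$ at every vertex; each $h_j^*$ is obtained by running~\eqref{eq-recursion} with parameter $\beta_j$ and common $B=B_1=B_2$ on the shared tree. The Lipschitz properties of $(\beta,h)\mapsto\atanh(\tanh(\beta)\tanh(h))$ and of $\tanh$ then yield the pointwise stochastic recursion
\[
|\tanh(h_1^*)-\tanh(h_2^*)| \leq C_\beta\,K\,|\beta_1-\beta_2| + C_\beta\,\tanh(\beta_2)\sum_{i=1}^K |\tanh(h_{1,i}^*)-\tanh(h_{2,i}^*)|,
\]
together with the trivial bound $|\tanh(h_1^*)-\tanh(h_2^*)|\leq 2$, where $C_\beta=(1-\tanh^2(\beta_1\vee\beta_2))^{-1}$.

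The main obstacle is that under $\tau\in(2,3)$ the offspring distribution $\rho$ has infinite mean, so no direct $L^1$ iteration of the recursion closes. To overcome this I would truncate $K$ at a level $M$ chosen as a function of $|\beta_1-\beta_2|$. On the event $\{K>M\}$ use the trivial bound $2$ combined with the tail estimate $\prob[K>M]\leq c\,M^{-(\tau-2)}$, which is inherited from~\eqref{eq-ppowerlaw} via $\rho_k=(k+1)P_{k+1}/\overline{P}$. On $\{K\leq M\}$ use the recursive estimate together with the truncated first moment $\expec[K\,\mathbf{1}_{\{K\leq M\}}]\leq c'\,M^{3-\tau}$. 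Iterating the truncated recursion along the tree, and balancing the tail error $M^{-(\tau-2)}$ against the recursion error carrying $|\beta_1-\beta_2|$ and $M^{3-\tau}$, the optimal choice of $M$ produces the H\"older exponent $\tau-1$. This exponent matches the tail exponent of $P$, which reflects that the leading contribution to the discrepancy comes from vertices of atypically large degree.

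Part~(b) follows the same blueprint. The only structural change is that in the recursive bound the term $C_\beta\,K\,|\beta_1-\beta_2|$ is replaced by a single $|B_1-B_2|$ term (without a factor of $K$), which is a simpler situation but still subject to the same $\rho$-tail obstruction when iterating recursively. The bounded-Lipschitz reduction, the tree coupling, the truncation at level $M$, and the optimization in $M$ carry over unchanged and produce the same H\"older exponent $\tau-1$.
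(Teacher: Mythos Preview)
Your decomposition places the H\"older exponent $\tau-1$ on the wrong side of the argument, and as a result the proof cannot close. Since $\tau\in(2,3)$, the target exponent satisfies $\tau-1>1$, so the proposition asserts that $|\varphi_{h_1^*}-\varphi_{h_2^*}|=o(|\beta_1-\beta_2|)$. Your first step only produces the linear bound $|\varphi_{h_1^*}-\varphi_{h_2^*}|\leq C\,\|\tanh(h_1^*)-\tanh(h_2^*)\|_{\rm \sss MK}$; to finish you would then need $\|\tanh(h_1^*)-\tanh(h_2^*)\|_{\rm \sss MK}\leq C|\beta_1-\beta_2|^{\tau-1}$, i.e.\ that the law of $\tanh(h^*)$ has \emph{vanishing} first-order dependence on $\beta$. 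This is simply false: the map $\beta\mapsto h^*(\beta,B)$ is Lipschitz but not better (differentiate the fixed-point relation at any nondegenerate $\rho$), and indeed the paper proves exactly the Lipschitz bound $\|\tanh(h_{\beta_1}^*)-\tanh(h_{\beta_2}^*)\|_{\rm \sss MK}\leq\lambda|\beta_1-\beta_2|$ via a GHS/monotonicity argument (Lemma~\ref{lem-dephbeta}). Your truncation-of-$K$ iteration is thus trying to establish a false inequality, which is also why it does not close: on $\{K\leq M\}$ the propagation factor is $C\,\tanh(\beta)\,\expec[K\ind_{\{K\leq M\}}]\asymp M^{3-\tau}\to\infty$, so there is no choice of $M$ that simultaneously makes the tail term $M^{-(\tau-2)}$ small and keeps the recursion contractive.

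The missing idea is that the super-linear exponent must come from the $\varphi$-side, not the $h^*$-side. The Bethe functional $\varphi_{h^*}(\beta_1,B_1)$ is \emph{stationary} in the law of $h^*$ at the fixed point: writing $\varphi$ as $F_0+\expec[F_L(X_1,\ldots,X_L)]$ with $X_i=\tanh(h_i)$, one has $\expec\big[L\,\partial_{x_1}F_L(x,X_2,\ldots,X_L)\big]=0$ for all $x$, precisely because $h^*$ solves~\eqref{eq-recursion} (see~\eqref{eq-ELdF0}). This kills the first-order term in a Taylor expansion of $F_L$, so the dependence of $\varphi$ on $h^*$ is governed by the second-order term together with the tail of $L$ (which has law $P$, tail exponent $\tau-1$). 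Truncating $L$ at level $\theta$ and optimizing $\theta=\|\tanh(h_1^*)-\tanh(h_2^*)\|_{\rm \sss MK}^{-1}$ then gives $|\varphi_{h_1^*}-\varphi_{h_2^*}|\leq\lambda\,\|\tanh(h_1^*)-\tanh(h_2^*)\|_{\rm \sss MK}^{\tau-1}$ (Lemma~\ref{lem-depphih}), and combining with the Lipschitz bound on $h^*$ yields the proposition. In short: the truncation should be in $L$ (law $P$), not in $K$ (law $\rho$), and it only pays off after you have used stationarity to remove the linear term---something your one-at-a-time swap argument discards.
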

\ch{Note that this proposition only holds if $\tau\in(2,3)$. For $\tau>3$, the exponent $\tau-1$ can be improved to $2$, as is shown in~\cite{DemMon10}, but this is not of importance to the proof.} We need part (b) of the proposition above later in the proof of Corollary~\ref{cor-thermq}.

\begin{proposition}[Convergence of the internal energy]\label{prop-dpressure}
Assume that the graph sequence $\{G_n\}_{n\geq1}$ is locally tree-like with asymptotic degree distribution $P$, where $P$ has strongly finite mean, and is uniformly sparse. Let $\beta>0$. Then, a.s.,
\eq
\lim_{n\rightarrow\infty} \frac{\partial}{\partial \beta} \psi_n(\beta,B) = \frac{\partial}{\partial \beta} \varphi(\beta,B),
\en
where $\varphi(\beta,B)$ is given in \eqref{eq-pressure}.
\end{proposition}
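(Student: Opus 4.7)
The plan is to verify that both sides of the claimed identity equal $\frac{\overline{P}}{2}\expec[\langle\sigma_1\sigma_2\rangle_{\nu'_2}]$. For the right-hand side, Proposition~\ref{prop-phiindeph}(a) gives a H\"older bound with exponent $\tau-1>1$, which implies
\[
\bigl|\varphi_{h^*(\beta_0+\delta)}(\beta_0,B)-\varphi_{h^*(\beta_0)}(\beta_0,B)\bigr|=o(|\delta|)\quad\text{as }\delta\to 0.
\]
Hence $\partial_\beta\varphi(\beta_0,B)$ exists and coincides with the partial derivative of \eqref{eq-pressure} taken with the law of $h^*$ regarded as independent of $\beta$. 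I would then differentiate the three summands of \eqref{eq-pressure} directly and rewrite the leaf-sum coming from the third summand as a single edge correlation by invoking the fixed-point identity \eqref{eq-recursion} together with the size-biasing relation $\expec[L\,g(h_1,L)]=\overline{P}\,\expec[g(h_1,K+1)]$ for $K\sim\rho$; after simplification this yields
\[
\frac{\partial}{\partial\beta}\varphi(\beta,B)=\frac{\overline{P}}{2}\,\expec\!\left[\frac{\tanh(\beta)+\tanh(h_1)\tanh(h_2)}{1+\tanh(\beta)\tanh(h_1)\tanh(h_2)}\right]=\frac{\overline{P}}{2}\,\expec[\langle\sigma_1\sigma_2\rangle_{\nu'_2}].
\]

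For the left-hand side, differentiation under the exponential gives $\frac{\partial}{\partial\beta}\psi_n(\beta,B)=(|E_n|/n)\cdot\expec_{\mathrm{unif}}[\langle\sigma_I\sigma_J\rangle_{\mu_n}]$, where $(I,J)$ is a uniformly chosen edge; by \eqref{eq-onedges} the prefactor converges a.s.\ to $\overline{P}/2$, so it suffices to prove $\expec_{\mathrm{unif}}[\langle\sigma_I\sigma_J\rangle_{\mu_n}]\to\expec[\langle\sigma_1\sigma_2\rangle_{\nu'_2}]$. I would establish this by a triple-limit argument parametrised by a degree cap $\ell$ and a tree depth $t$. First, by Definition~\ref{ass-unisparse}, the contribution of edges having at least one endpoint of degree exceeding $\ell$ is $o(1)$ in $\ell$ uniformly in $n$. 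Second, for the remaining good edges, Definition~\ref{ass-convtree} implies that the depth-$t$ neighborhood of $(I,J)$ in $G_n$ converges in distribution to two independent copies of $\calT(\rho,t)$ joined at a shared edge. Third, on such a tree the spatial Markov property together with the GKS correlation inequalities sandwich $\langle\sigma_I\sigma_J\rangle_{\mu_n}$ between the tree correlations computed under $\pm$ boundary conditions at depth $t$; each of these is obtained by propagating effective fields from the leaves to the root edge, and by Proposition~\ref{prop-recursion} these fields converge in distribution as $t\to\infty$ to i.i.d.\ copies of $h^*$. Sending $n\to\infty$, then $t\to\infty$, and finally $\ell\to\infty$ collapses the sandwich to the desired limit.

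The main obstacle is a quantitative version of the GKS-based sandwich in which the gap between the $\pm$-boundary correlations at depth $t$ decays uniformly over bounded-degree (but possibly $\ell$-large) tree neighborhoods, so that the three limits can be performed in the stated order without error accumulation. In the infinite-variance regime $\tau\in(2,3)$ the finite-second-moment bound used in \cite{DemMon10} is no longer available; it must be replaced by the strongly-finite-mean tail in Definition~\ref{ass-degdist}, which gives $\expec[D\,\ind_{\{D>\ell\}}]=O(\ell^{-(\tau-2)})$, combined with the H\"older estimate of Proposition~\ref{prop-phiindeph} to absorb the contribution of vertices of intermediate degree. Closing this triple limit uniformly is the technical heart of the argument.
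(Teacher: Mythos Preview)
Your overall strategy---showing both sides equal $\tfrac{\overline{P}}{2}\,\expec[\langle\sigma_1\sigma_2\rangle_{\nu'_2}]$---is exactly the paper's; the tree side (using Proposition~\ref{prop-phiindeph}(a) to freeze $h^*$, then differentiating and invoking size-biasing plus the fixed-point identity) matches the paper's Lemma~\ref{lem-dphi} essentially verbatim.

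On the graph side you have the right skeleton but you over-engineer the limit and misidentify the ``main obstacle''. Two points. First, the sandwich should be between \emph{free} and $+$ boundary conditions at depth $t$, not $\pm$; GKS gives the lower bound by deleting the complement of $B_{(I,J)}(t)$, not by forcing $-1$ on its boundary. Second, and more importantly, no degree cap $\ell$ and no appeal to Proposition~\ref{prop-phiindeph} are needed here. For each fixed $t$ you already have
\[
\langle\sigma_I\sigma_J\rangle^{f}_{B_{(I,J)}(t)}\;\leq\;\langle\sigma_I\sigma_J\rangle_{\mu_n}\;\leq\;\langle\sigma_I\sigma_J\rangle^{+}_{B_{(I,J)}(t)},
\]
so it suffices to show that the edge-average of each bound converges, as $n\to\infty$, to $\expec[\langle\sigma_{\phi_1}\sigma_{\phi_2}\rangle^{f/+}_{\overline{\calT}(\rho,t)}]$. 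This is a statement about local weak convergence of \emph{edge}-rooted neighborhoods plus uniform sparsity (the paper simply cites \cite[Lemma~6.4]{DemMon10}); no uniformity of the boundary-condition gap in the degree is required, because at this stage $t$ is fixed and the gap is not yet being closed. Only after $n\to\infty$ do you send $t\to\infty$, and then the two tree expectations coincide by Proposition~\ref{prop-recursion} (both the free and $+$ effective fields at depth $t$ converge in law to $h^*$). The H\"older estimate of Proposition~\ref{prop-phiindeph} plays no role on the graph side; it is used only to differentiate $\varphi$. Your proposed triple limit with an additional $\ell\to\infty$ would also work, but it introduces a uniformity worry that the simpler two-limit order ($n\to\infty$ then $t\to\infty$) avoids entirely.
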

By Proposition~\ref{prop-dpressure} and bounded convergence,
\eq
\int_{\varepsilon}^{\beta} \lim_{n\rightarrow\infty} \frac{\partial}{\partial \beta'}\psi_n(\beta',B) \dint \beta' = \int_{\varepsilon}^{\beta} \frac{\partial}{\partial \beta'}\varphi(\beta',B) \dint \beta'=\varphi(\beta,B)-\varphi(\varepsilon,B),
\en
again by the fundamental theorem of calculus.

Observing that $0\leq \tanh(h^*) \leq 1$, one can show that, by dominated convergence, $\varphi(\beta,B)$ is right-continuous in $\beta=0$. Thus, letting $\varepsilon \downarrow 0$,
\begin{align}
\lim_{n\rightarrow\infty} \psi_n(\beta,B) &= \lim_{\varepsilon\downarrow 0} \lim_{n\rightarrow\infty}\left[\psi_n(0,B) +  \int_{0}^{\varepsilon} \frac{\partial}{\partial \beta'}\psi_n(\beta',B) \dint \beta' + \int_{\varepsilon}^{\beta} \frac{\partial}{\partial \beta'}\psi_n(\beta',B) \dint \beta' \right]\nn\\
&= \varphi(0,B) + \lim_{\varepsilon\downarrow 0}\left(\varphi(\beta,B)-\varphi(\varepsilon,B)\right) = \varphi(\beta,B),
\end{align}
which completes the proof for $B>0$.

The Ising model with $B<0$ is equivalent to the case $B>0$, because one can multiply all spin variables $\{\sigma_i\}_{i\in[n]}$ and $B$ with $-1$ without changing Boltzmann distribution~\eqref{eq-boltzmann}. Furthermore, note that,
\eq
\left|\frac{\partial}{\partial B} \psi_n(\beta,B)\right|=\left|\frac{1}{n} \sum_{i\in[n]} \left<\sigma_i\right>_{\mu_n}\right| \leq 1,
\en
so that $B\mapsto\psi_n(\beta,B)$ is uniformly Lipschitz continuous with Lipschitz constant one. Therefore,
\eq
\lim_{n\rightarrow\infty} \psi_n(\beta,0) = \lim_{n\rightarrow\infty} \lim_{B\downarrow0} \psi_n(\beta,B)
=\lim_{B\downarrow0}\lim_{n\rightarrow\infty}  \psi_n(\beta,B) = \lim_{B\downarrow0} \varphi(\beta,B).
\en
\qed

The proof given above follows the line of argument in~\cite{DemMon10}, but in order to prove Propositions~\ref{prop-recursion},~\ref{prop-phiindeph} and~\ref{prop-dpressure} we have to make substantial changes to generalize the proof to the infinite variance case.

To prove Proposition~\ref{prop-recursion}, we adapt the proof of Dembo and Montanari by taking the actual forward degrees into account, instead of using Jensen's inequality to replace them by expected forward degrees, which are potentially infinite. This also makes a separate analysis of nodes that have zero offspring superfluous, which considerably simplifies the analysis.

The proof of Proposition~\ref{prop-phiindeph}(a) is somewhat more elaborate, because we have to distinguish between the cases where $L$ in~\eqref{eq-pressure} is small or large, but the techniques used remain similar. By, again, taking into account the actual degrees more precisely, the analysis is simplified however: we, for example, do not rely on the exponential decay of the correlations. Part (b) of this proposition is new and can be proved with similar techniques. The proof of Proposition~\ref{prop-dpressure} is proven in a similar way as in~\cite{DemMon10}.

The remainder of this paper is organized as follows. First we shall review some preliminaries on Ising models in Section~\ref{sec-preli}. Next, we shall study the tree recursion of~\eqref{eq-recursion} and prove Proposition~\ref{prop-recursion} in Section~\ref{sec-recursion} and Proposition~\ref{prop-phiindeph} in Section~\ref{sec-phiindeph}. Finally, in Section~\ref{sec-dpressure}, we shall prove Proposition~\ref{prop-dpressure}. In Section~\ref{sec-thermq} we shall study the thermodynamic quantities to prove Corollary~\ref{cor-thermq}.

\section{Preliminaries}\label{sec-preli}

The first result on ferromagnetic Ising models we will heavily rely on is the \emph{Griffiths, Kelly, Sherman (GKS) inequality}, which gives various monotonicity properties:
\begin{lemma}[GKS inequality]\label{lem-Griffith}
Consider two Ising measures $\mu$ and $\mu'$ on graphs $G=(V,E)$ and $G'=(V,E')$, with inverse temperatures $\beta$ and $\beta'$ and external fields $\underline{B}$ and $\underline{B}'$, respectively.
If $E \subseteq E'$, $\beta \leq \beta'$ and $0 \leq B_i \leq B_i'$ for all $i \in V$, then, for any $U \subseteq V$,
\eq
0 \leq \big<\prod_{i \in U} \sigma_i\big>_{\mu} \leq \big<\prod_{i \in U} \sigma_i\big>_{\mu'}.
\en
\end{lemma}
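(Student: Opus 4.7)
The plan is to recognize this as the two classical Griffiths--Kelly--Sherman inequalities and to prove them by the standard Ginibre duplication trick. First I would reduce to a common setting by viewing both $\mu$ and $\mu'$ as Ising measures on the vertex set $V$ with edge-dependent couplings $J_{ij}=\beta\,\ind_{\{(i,j)\in E\}}$ and $J'_{ij}=\beta'\,\ind_{\{(i,j)\in E'\}}$, so that the hypotheses become $0\le J_{ij}\le J'_{ij}$ and $0\le B_i\le B'_i$. It therefore suffices to prove, for an Ising model with arbitrary non-negative couplings and fields, both (GKS-I) $\langle\prod_{i\in U}\sigma_i\rangle\ge 0$ and (GKS-II) monotonicity of $\langle\prod_{i\in U}\sigma_i\rangle$ in each $J_{ij}$ and each $B_i$; the lemma then follows by integrating (GKS-II) along a monotone path from $(J,B)$ to $(J',B')$. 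For (GKS-I) I would use $e^{J\sigma_i\sigma_j}=\cosh(J)(1+\sigma_i\sigma_j\tanh J)$ (and the analogous identity for $B_i$), so that the numerator and denominator of $\langle\prod_{i\in U}\sigma_i\rangle$ expand into sums with non-negative coefficients of terms $\sum_\sigma\prod_i\sigma_i^{n_i}\in\{0,2^{|V|}\}$, establishing both non-negativity and positivity of the partition function.

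For (GKS-II) I would differentiate,
\[
\tfrac{\partial}{\partial J_{kl}}\langle\sigma_U\rangle=\langle\sigma_U\sigma_k\sigma_l\rangle-\langle\sigma_U\rangle\langle\sigma_k\sigma_l\rangle,\qquad\tfrac{\partial}{\partial B_k}\langle\sigma_U\rangle=\langle\sigma_U\sigma_k\rangle-\langle\sigma_U\rangle\langle\sigma_k\rangle,
\]
and introduce an independent copy $\tilde\sigma$ of $\sigma$, rewriting each covariance as $\tfrac12\langle\langle(\sigma_U-\tilde\sigma_U)(\sigma_W-\tilde\sigma_W)\rangle\rangle$ under $\mu\otimes\mu$, with $W=\{k,l\}$ or $W=\{k\}$. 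The key step is the change of variables $a_i=(\sigma_i+\tilde\sigma_i)/2$, $b_i=(\sigma_i-\tilde\sigma_i)/2\in\{-1,0,+1\}$ (with $a_ib_i=0$). Under this substitution the duplicated Hamiltonian becomes $2\sum_{(i,j)}J_{ij}(a_ia_j+b_ib_j)+2\sum_iB_ia_i$, a polynomial in $(a,b)$ with non-negative coefficients, and a direct expansion of $\prod_{i\in U}(a_i+b_i)-\prod_{i\in U}(a_i-b_i)$ shows that $\sigma_U-\tilde\sigma_U$ (and likewise $\sigma_W-\tilde\sigma_W$) is a sum of $(a,b)$-monomials with non-negative coefficients. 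Consequently their product is too, so (GKS-II) reduces to $\langle\langle M\rangle\rangle\ge 0$ for every monomial $M=\prod_i a_i^{\gamma_i}b_i^{\delta_i}$. Taylor-expanding $e^{H(\sigma)+H(\tilde\sigma)}$ then further reduces this to a parity check: the unweighted single-site sum $\sum_{\sigma_i,\tilde\sigma_i}a_i^{\gamma_i}b_i^{\delta_i}$ equals $4$, $2$, or $0$ depending on the parities of $\gamma_i,\delta_i$ and whether both are positive, but is never negative.

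The main obstacle is the combinatorial sign bookkeeping after the duplication: one must verify that $\sigma_U-\tilde\sigma_U$ and $\sigma_W-\tilde\sigma_W$ really do have only non-negative $(a,b)$-coefficients (which amounts to observing that only monomials with an odd number of $b$-factors survive the subtraction, each with coefficient $+2$) and that the single-site moments of the duplicated system are never negative. Once these two elementary but finicky checks are in hand, everything else is algebraic rearrangement and the lemma follows by integration along a monotone path in the coupling-field parameters.
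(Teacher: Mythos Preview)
Your argument is correct: this is exactly the standard Ginibre duplication proof of the Griffiths--Kelly--Sherman inequalities, and the combinatorial checks you flag (non-negativity of the $(a,b)$-coefficients in $\sigma_U-\tilde\sigma_U$ and of the single-site moments) go through as you describe.

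There is, however, nothing to compare against: the paper does not prove this lemma at all. It is stated as a classical preliminary, and immediately after the statement the authors simply cite Griffiths \cite{Gri67a} and Kelly--Sherman \cite{KelShe68} for the proof before moving on to the GHS inequality. So you have supplied a full argument where the paper intentionally defers to the literature; your write-up is appropriate if a self-contained proof is desired, and otherwise a citation would suffice.
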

A weaker version of this inequality was first proved by Griffiths \cite{Gri67a} and later generalized by Kelly and Sherman \cite{KelShe68}. The second result on ferromagnetic Ising models is an inequality by Griffiths, Hurst and Sherman \cite{GriHurShe70} which shows the concavity of the magnetization in the external (positive) magnetic fields.
\begin{lemma}[GHS inequality]\label{lem-GHS} Let $\beta \geq 0$ and $B_i \geq 0$ for all $i \in V$. Denote by
\eq
m_j(\underline{B})=\mu(\{\sigma: \sigma_j=+1\}) - \mu(\{\sigma: \sigma_j=-1\})
\en
the magnetization of vertex $j$ when the external fields at the vertices are $\underline{B}$. Then, for any \ch{three} vertices $j,k,l \in V$,
\eq
\frac{\partial^2}{\partial B_k \partial B_\ell}m_j (\underline{B}) \leq 0.
\en
\end{lemma}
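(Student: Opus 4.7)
The strategy is to reduce the inequality to a sign condition on the truncated three-point correlation function and then verify this sign via the classical duplicate-variable method of Griffiths, Hurst, and Sherman.

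The first step is pure differentiation. Starting from $m_j(\underline{B}) = \langle\sigma_j\rangle_\mu$ and using $\partial_{B_k}\langle f\rangle_\mu = \langle f\sigma_k\rangle_\mu - \langle f\rangle_\mu\langle\sigma_k\rangle_\mu$, two successive differentiations give
\[
\frac{\partial^2 m_j}{\partial B_k\, \partial B_\ell} \;=\; u_3(j,k,\ell) \;:=\; \langle\sigma_j\sigma_k\sigma_\ell\rangle_\mu - \langle\sigma_j\sigma_k\rangle_\mu\langle\sigma_\ell\rangle_\mu - \langle\sigma_j\sigma_\ell\rangle_\mu\langle\sigma_k\rangle_\mu - \langle\sigma_k\sigma_\ell\rangle_\mu\langle\sigma_j\rangle_\mu + 2\langle\sigma_j\rangle_\mu\langle\sigma_k\rangle_\mu\langle\sigma_\ell\rangle_\mu,
\]
the Ursell three-point function. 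The lemma thus reduces to the claim $u_3(j,k,\ell) \leq 0$ whenever $\beta \geq 0$ and $B_i \geq 0$ for all $i \in V$.

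To establish this sign, the plan is to invoke the duplicate-variable trick. Introduce an independent copy $\tilde\sigma$ of the system with identical $(\beta, \underline{B})$, and change variables to $s_i = (\sigma_i - \tilde\sigma_i)/2$ and $t_i = (\sigma_i + \tilde\sigma_i)/2$, which take values in $\{(\pm 1, 0), (0, \pm 1)\}$. The identities
\[
\sigma_i\sigma_j + \tilde\sigma_i\tilde\sigma_j \;=\; 2(t_it_j + s_is_j), \qquad B_i(\sigma_i + \tilde\sigma_i) \;=\; 2B_i t_i
\]
make the doubled Hamiltonian ferromagnetic in both $s$- and $t$-variables, with the external field coupling only to $t$. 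Expanding $u_3$ in the doubled measure and exploiting the $s \leftrightarrow -s$ symmetry of the $s$-sector together with the non-negativity of the $e^{B_i t_i}$ weights, one rewrites
\[
u_3(j,k,\ell) \;=\; -\bigl\langle F_{j,k,\ell}(s,t)\bigr\rangle_{\mathrm{doub}}
\]
for a manifestly non-negative observable $F_{j,k,\ell}$.

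The final step is to verify $\langle F_{j,k,\ell}\rangle_{\mathrm{doub}} \geq 0$ by a GKS-type argument on the doubled system. Conditioning on $t$, the residual distribution of the $s$-variables supported on $\{i : t_i = 0\}$ is an Ising measure with non-negative couplings and zero field, to which Lemma~\ref{lem-Griffith} applies; combined with the non-negativity of the $t$-dependent weights (which use $B_i \geq 0$), this yields the desired sign. The main obstacle is precisely the algebraic rearrangement in the middle step: naive expansion of $u_3$ into doubled expectations produces cancelling contributions, and one must carefully combine terms to isolate the non-negative observable $F_{j,k,\ell}$. A cleaner modern alternative is Aizenman's random-current representation, in which $u_3 \leq 0$ drops out of the switching lemma applied to a three-source configuration, at the cost of introducing heavier machinery.
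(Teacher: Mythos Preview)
The paper does not supply a proof of this lemma: it is stated as a classical result and attributed to Griffiths, Hurst and Sherman \cite{GriHurShe70}, so there is no in-paper argument to compare against.

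Your outline follows the Lebowitz duplicate-variable route rather than the original GHS induction, and the reduction to $u_3(j,k,\ell)\le 0$ together with the change of variables $s_i=(\sigma_i-\tilde\sigma_i)/2$, $t_i=(\sigma_i+\tilde\sigma_i)/2$ is correct. The structural observation that, conditional on $t$, the $s$-variables on $\{i:t_i=0\}$ form a zero-field ferromagnet is also right and is exactly what makes GKS applicable.

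That said, the proposal is a plan rather than a proof: you explicitly flag the central step --- producing the non-negative observable $F_{j,k,\ell}$ with $u_3=-\langle F_{j,k,\ell}\rangle_{\mathrm{doub}}$ --- as ``the main obstacle'' and do not carry it out. In the standard execution one shows the equivalent inequality $\langle s_j s_k t_\ell\rangle_{\mathrm{doub}}\le \langle s_j s_k\rangle_{\mathrm{doub}}\langle t_\ell\rangle_{\mathrm{doub}}$ (or its symmetric variants), and this requires a genuine expansion and sign-checking argument, not merely an appeal to GKS on the $s$-sector. As written, the proposal asserts existence of $F_{j,k,\ell}$ without exhibiting it, so the core of the proof is missing. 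The random-current alternative you mention is a legitimate route, but again would need to be actually executed to constitute a proof.
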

The final preliminary observation we need is a lemma that reduces the computation of the Ising measure on a tree to the computation of Ising measures on subtrees:
\begin{lemma}[Pruning trees]\label{lem-treepruning}
For $U$ a subtree of a finite tree $\calT$, let $\partial U$ be the subset of vertices of $U$ that connect to a vertex in $W\equiv \calT \setminus U$. Denote by $\left<\sigma_u\right>_{\mu_{W,u}}$ the magnetization of vertex $u \in \partial U$ of the Ising model on $W \cup \{u\}$.
Then, the marginal Ising measure on $U$, $\mu_U^{\mathcal{T}}$, is the same as the Ising measure on $U$ with magnetic fields
\begin{equation}
B_u'=\left\{\begin{array}{ll}\atanh(\left<\sigma_u\right>_{\mu_{W,u}}), \qquad & \qquad u \in \partial U,\\
                            B_u, \qquad & \qquad u \in U \setminus \partial U.
            \end{array}\right.
\end{equation}
\end{lemma}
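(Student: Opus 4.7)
The plan is to integrate out the spins $\sigma^W$ indexed by $W=\calT\setminus U$ in the Boltzmann weight of $\mu^{\calT}$, and to exploit the acyclicity of $\calT$ to show that this marginalization amounts exactly to replacing the external fields at the boundary vertices by the stated values.

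First I would split the exponent of $\mu^{\calT}$ into three groups: the terms supported on edges and vertices of $U$, the cross-edges between $\partial U$ and $W$, and the terms supported entirely on $W$. Because $\calT$ is a tree and $U$ is a connected subtree, removing the vertices of $U$ partitions $W$ into disjoint subtrees $W_1,\dots,W_m$, each attached to $U$ through exactly one edge $(u_k,v_k)$ with $u_k\in\partial U$ and $v_k\in W_k$; in particular, if a boundary vertex $u$ has several neighbours in $W$, those neighbours lie in \emph{distinct} components $W_k$. Consequently the sum over $\sigma^W$ of the cross-edge and $W$-terms factorizes over $k=1,\dots,m$, and each factor depends on $\sigma^U$ only through the single value $\sigma_{u_k}\in\{\pm1\}$. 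Any such factor is therefore of the form $a_k\,e^{c_k\sigma_{u_k}}$, where $c_k=\tfrac12\log\bigl(Z_{W_k}^{(+)}/Z_{W_k}^{(-)}\bigr)$ and $Z_{W_k}^{(\pm)}$ denotes the partition function on $W_k$ with $\sigma_{u_k}$ clamped to $\pm1$.

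Grouping the factors attached to a fixed $u\in\partial U$ and absorbing the $\sigma^U$-independent constants into the normalization yields
\begin{equation}
\mu_U^{\calT}(\sigma^U)\;\propto\;\exp\Bigl\{\beta\sum_{(i,j)\in E(U)}\sigma_i\sigma_j+\sum_{i\in U\setminus\partial U}B_i\sigma_i+\sum_{u\in\partial U}(B_u+C_u)\sigma_u\Bigr\},
\end{equation}
where $C_u\equiv\sum_{k:\,u_k=u}c_k$. To identify $B_u+C_u$ with $\atanh(\langle\sigma_u\rangle_{\mu_{W,u}})$, I would carry out exactly the same factorization for the Ising model on $W\cup\{u\}$: fixing $\sigma_u\in\{\pm1\}$ and summing over $\sigma^W$ produces the same factors $a_k\,e^{c_k\sigma_u}$ attached to $u$ (the components of $W$ not attached to $u$ contribute only multiplicative constants), now multiplied by the field weight $e^{B_u\sigma_u}$. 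A direct computation then gives $\langle\sigma_u\rangle_{\mu_{W,u}}=\tanh(B_u+C_u)$, so that $\atanh(\langle\sigma_u\rangle_{\mu_{W,u}})=B_u+C_u$, as required.

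The argument is essentially bookkeeping; the one point that requires care is the case of a boundary vertex $u\in\partial U$ with several edges into $W$. There one must use the tree property of $\calT$ to guarantee that the corresponding $W$-subtrees do not interact through $W$, so that their contributions combine additively into a single effective field at $u$ rather than coupling to one another.
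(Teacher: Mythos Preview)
Your argument is correct and is precisely the ``direct application of the Boltzmann distribution'' the paper invokes; the paper gives no further details beyond citing \cite[Lemma~4.1]{DemMon10}. In particular, your careful treatment of boundary vertices with multiple neighbours in $W$ and of the disconnected components of $W$ in the model on $W\cup\{u\}$ fills in exactly the bookkeeping the paper omits.
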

The proof of this lemma follows from a direct application of the Boltzmann distribution given in~\eqref{eq-boltzmann}, see \cite[Lemma~4.1]{DemMon10}.

\section{Tree recursion: proof of Proposition~\ref{prop-recursion}}\label{sec-recursion}

To prove Proposition~\ref{prop-recursion}, we will first study the Ising model on a tree with $\ell$ generations, $\calT(\ell)$, with either $+$~or free boundary conditions, where the Ising models on the tree $\calT(\ell)$ with $+$/free boundary conditions are defined by the Boltzmann distributions
\eq
\mu^{\ell,+}(\sigma) = \frac{1}{Z^{\ell,+}(\beta, \underline{B})} \exp \left\{\beta \sum_{(i,j) \in \calT(\ell)} \sigma_i \sigma_j + \sum_{i \in \calT(\ell)} B_i \sigma_i\right\} \ch{\ind_{\{\sigma_i=+1, \textrm{\ for all\ } i\in\partial \calT(\ell)\}}},
\en
and
\eq
\mu^{\ell,f}(\sigma) = \frac{1}{Z^{\ell,f}(\beta, \underline{B})} \exp \left\{\beta \sum_{(i,j) \in \calT(\ell)} \sigma_i \sigma_j + \sum_{i \in \calT(\ell)} B_i \sigma_i\right\},
\en
respectively, where $Z^{\ell,+/f}$ are the proper normalization factors and $\partial \calT(\ell)$ denotes the vertices in the $\ell$-th generation of $\calT(\ell)$. In the next lemma we will show that the effect of these boundary conditions vanishes when $\ell\rightarrow\infty$. \ch{This lemma is a generalization of \cite[Lemma~4.3]{DemMon10}, where this result was proved in expectation for graphs with a finite-variance degree distribution. This generalization is possible by taking the degrees into account more precisely, instead of using Jensen's inequality to replace them by average degrees. This also simplifies the proof.}

We will then show that the recursion~\eqref{eq-recursion} has a fixed point and use a coupling with the root magnetization in trees and Lemma~\ref{lem-45} to show that this fixed point does not depend on the initial distribution $h^{(0)}$, thus showing that \eqref{eq-recursion} has a {\em unique} fixed point.
\begin{lemma}[Vanishing effect of boundary conditions]\label{lem-45}
Let $m^{\ell,+/f}(\underline{B})$ denote the root magnetization given $\calT(\ell)$ with external field per vertex $B_i \geq B_{\min} > 0$ when the tree has $+$/free boundary conditions. Assume that the forward degrees satisfy $\Delta_i < \infty$ a.s., for all $i\in \calT(\ell-1)$. Let $0\leq \beta \leq \beta_{\max} < \infty$. Then, there exists an $M=M(\beta_{\max},B_{\min})<\infty$ such that, a.s.,
\eq
m^{\ell,+}(\underline{B}) - m^{\ell,f}(\underline{B}) \leq \frac{M}{\ell},
\en
for all $\ell \geq 1$.
\end{lemma}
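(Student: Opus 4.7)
My plan is to combine the pruning lemma with the GKS inequality to obtain monotonicity in the depth $k$ of the gap $d_k:=m^{k,+}(\underline{B})-m^{k,f}(\underline{B})$, and then to reduce the lemma to a tree-uniform summability estimate for $\sum_k d_k$. By Lemma~\ref{lem-treepruning}, $m^{k,+}(\underline{B})$ equals the root magnetization of the Ising measure on $\calT(\ell)$ when the spins at level $k$ are forced to $+1$ (the subtrees below level $k$ then play no role), and analogously $m^{k,f}(\underline{B})$ corresponds to free BC at level $k$. Lemma~\ref{lem-Griffith} (GKS) therefore yields that $k\mapsto m^{k,+}(\underline{B})$ is non-increasing while $k\mapsto m^{k,f}(\underline{B})$ is non-decreasing, whence the non-negative sequence $(d_k)_{k\geq 1}$ is itself non-increasing, since
\[
d_k-d_{k+1} = \bigl(m^{k,+}-m^{k+1,+}\bigr) + \bigl(m^{k+1,f}-m^{k,f}\bigr)\geq 0.
\]

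Granted a tree-uniform summability estimate $\sum_{k=1}^{\infty} d_k\leq M$ with some $M=M(\beta_{\max},B_{\min})<\infty$, the monotonicity of $(d_k)$ immediately yields $\ell\,d_\ell \leq \sum_{k=1}^{\ell} d_k \leq M$, which is the desired bound $d_\ell\leq M/\ell$. The remainder of the argument is devoted to this summability claim, which is the main obstacle.

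To that end, I work with the cavity recursion $m^{k,\bullet}=\tanh(h_0^{(\bullet,k)})$, in which the cavity fields satisfy
\[
h_v^{(\bullet,k)} = B_v + \sum_{c\in\mathrm{ch}(v)}\xi\bigl(\beta,h_c^{(\bullet,k-1)}\bigr),\qquad \xi(\beta,h):=\atanh\bigl(\tanh(\beta)\tanh(h)\bigr),
\]
with boundary data $h_v^{(+,0)}=+\infty$ (so that $\xi(\beta,h_v^{(+,0)})=\beta$) and $h_v^{(f,0)}=B_v$. The function $\xi(\beta,\cdot)$ is non-negative, concave, bounded above by $\beta$, and satisfies $\xi'(\beta,h)\leq\tanh\beta<1$ with $\xi'(\beta,h)\to 0$ as $h\to\infty$. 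The assumption $B_i\geq B_{\min}$ forces $h_v^{(f,k)}\geq B_{\min}$ uniformly in $v$ and $k$.

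Here is where the proof departs from that of \cite[Lemma~4.3]{DemMon10}. Dembo and Montanari apply Jensen's inequality at this point to reduce the forward degrees to their mean $\bar\rho$, which is infinite in the infinite-variance setting considered here. Instead, I retain the actual forward degrees: at a vertex $v$ with large forward degree $d_v$, the cavity field $h_v^{(\bullet,k)}$ is itself large (being a sum of up to $d_v$ non-negative terms in $[0,\beta]$), so the outer $\tanh$ saturates and $m_v^{+,k}-m_v^{f,k}$ remains small irrespective of $d_v$ — high-degree vertices help rather than hurt. Combined with the strict contraction of $\xi$ near its unique stable fixed point (guaranteed by $B_{\min}>0$), iterating the cavity recursion down the tree yields a geometric decay of $d_k$ at a rate depending only on $\beta_{\max}$ and $B_{\min}$, and hence the required summability.
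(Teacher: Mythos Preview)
Your reduction in the first two paragraphs is correct and matches the paper: the monotonicity of $k\mapsto m^{k,+}$ and $k\mapsto m^{k,f}$ via pruning and GKS gives $d_k$ non-increasing, so $\ell\,d_\ell\le\sum_{k=1}^\ell d_k$, and the whole lemma reduces to the tree-uniform bound $\sum_k d_k\le M(\beta_{\max},B_{\min})$.

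The gap is in your summability argument. You assert that the cavity recursion, together with the contraction of $\xi(\beta,\cdot)$ and the saturation of $\tanh$ at high-degree vertices, yields \emph{geometric} decay of $d_k$ at a rate depending only on $(\beta_{\max},B_{\min})$. But this is not substantiated. The edge-wise Lipschitz bound $\xi'(\beta,h)\le\tanh\beta<1$ is applied once per child, so after summing over the children of a vertex the contraction factor becomes $\Delta_v\tanh\beta$, which can be arbitrarily large. Your compensating heuristic---that a large $\Delta_v$ forces $h_v$ large and hence the outer $\tanh$ to saturate---is plausible but is never turned into an inequality; in particular you do not exhibit any constant $c<1$, depending only on $(\beta_{\max},B_{\min})$, such that $d_{k+1}\le c\,d_k$ for an arbitrary finite tree. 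The phrase ``unique stable fixed point'' also has no meaning here, since the tree is not assumed homogeneous and the $h_v^{(f,k)}$ do not converge to a common value.

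The paper closes this gap by a different, and short, device: the GHS inequality. Writing $m^k(\underline{B},H\{\Delta_i\})$ for the root magnetization with an extra field $H\Delta_i$ on each $i\in\partial\calT(k)$, pruning gives $m^{k,+}=m^{k-1}(\underline{B},\beta\{\Delta_i\})$ and $m^{k,f}\ge m^{k-1}(\underline{B},\xi\{\Delta_i\})$ with $\xi=\atanh(\tanh\beta\,\tanh B_{\min})$. Concavity of $H\mapsto m^{k-1}(\underline{B},H\{\Delta_i\})$ (GHS) then yields
\[
d_k \;\le\; m^{k-1}(\underline{B},\beta\{\Delta_i\})-m^{k-1}(\underline{B},0)
\;\le\; \frac{\beta}{\xi}\bigl(m^{k-1}(\underline{B},\xi\{\Delta_i\})-m^{k-1}(\underline{B},0)\bigr)
\;\le\; M\bigl(m^{k,f}-m^{k-1,f}\bigr),
\]
with $M=\sup_{0<\beta\le\beta_{\max}}\beta/\xi(\beta,B_{\min})$. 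The right-hand side telescopes, giving $\sum_{k=1}^\ell d_k\le M$ immediately. Note that this argument never claims any rate of decay for $d_k$; it only compares $d_k$ to a telescoping increment. The actual forward degrees $\Delta_i$ enter only through the direction $\{\Delta_i\}$ of the auxiliary field, and the GHS concavity handles them uniformly---this is precisely the ``taking the actual degrees into account'' that replaces Dembo--Montanari's Jensen step, not a saturation estimate of the kind you sketch.
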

\begin{rem}
Lemma~\ref{lem-45} is extremely general. For example, it also applies to trees arising from multitype branching processes.
\end{rem}
\begin{proof}
The lemma clearly holds for $\beta=0$, so we assume that $\beta>0$ in the remainder of the proof.

Denote by $m^{\ell}(\underline{B},\underline{H})$ the root magnetization given $\mathcal{T}(\ell)$ with free boundary conditions, when the external field on the vertices $i\in \partial \mathcal{T}(\ell)$ is $B_i+H_i$ and $B_i$ on all other vertices $i\in \mathcal{T}(\ell-1)$. Condition on the tree $\calT(\ell)$ and assume that the tree $\calT(\ell)$ is finite, which is true a.s., so that we can use Lemma~\ref{lem-treepruning}. Thus, for $1\leq k\leq \ell$,
\eq \label{eq-GI1}
m^{k,+}(\underline{B}) \equiv m^{k}(\underline{B}, \infty) = m^{k-1}(\underline{B}, \{\beta \Delta_i\}),
\en
where $\Delta_i$ is the forward degree of vertex $i \in \partial \calT(k-1)$. By the GKS inequality
\eq \label{eq-GI2}
m^{k-1}(\underline{B}, \{\beta \Delta_i\}) \leq m^{k-1}(\underline{B}, \infty).
\en

Since the magnetic field at all vertices in $\partial \calT(k)$ is at least $B_{\min}$ we can write, using Lemma~\ref{lem-treepruning} and  the GKS inequality, that
\eq \label{eq-GI3}
m^{k,f}(\underline{B}) \equiv m^{k}(\underline{B}, 0) \geq m^{k-1}(\underline{B},  \xi\{ \Delta_i\}),
\en
where
\eq \label{eq-defxi}
\xi = \xi(\beta, B_{\min}) = \atanh (\tanh(\beta)\tanh(B_{\min})).
\en
This inequality holds with equality when $B_i=B_{\min}$ for all $i \in \partial \calT(k)$. Using the GKS inequality again, we have that
\eq \label{eq-GI4}
m^{k-1}(\underline{B},  \xi\{ \Delta_i\}) \geq m^{k-1}(\underline{B}, 0).
\en

Note that $0\leq \xi(\beta,B_{\min}) \leq \beta$. Since $H \mapsto m^{k}(\underline{B}, H \{\Delta_i\})$ is concave in $H$ because of the GHS inequality, we have that
\eq \label{eq-GHS1}
m^{k-1}(\underline{B}, \beta \{\Delta_i\}) - m^{k-1}(\underline{B},0) \leq M \left(m^{k-1}(\underline{B}, \xi \{\Delta_i\}) - m^{k-1}(\underline{B},0)\right),
\en
where
\eq
M = M(\beta_{\max},B_{\min}) = \sup_{0 < \beta \leq \beta_{\max}}\frac{\beta}{\xi(\beta,B_{\min})}<\infty.
\en
Thus, we can rewrite $m^{k,+}(\underline{B})$ using~\eqref{eq-GI1} and bound $m^{k,f}(\underline{B})$ using~\eqref{eq-GI3} and~\eqref{eq-GI4}, to obtain
\eq
m^{k,+}(\underline{B}) - m^{k,f}(\underline{B}) \leq m^{k-1}(\underline{B},\beta \{\Delta_i\}) - m^{k-1}(\underline{B},0).
\en
By~\eqref{eq-GHS1}, we then have that
\eq \label{eq-concavity}
m^{k,+}(\underline{B}) - m^{k,f}(\underline{B}) \leq M \left(m^{k-1}(\underline{B},\xi \{\Delta_i\}) - m^{k-1}(\underline{B},0)\right) \leq M \left(m^{k}(\underline{B},0) - m^{k-1}(\underline{B},0)\right),
\en
where we have used~\eqref{eq-GI3} in the last inequality.

By~\eqref{eq-GI1} and~\eqref{eq-GI2}, $m^{k,+}(\underline{B})$ is non-increasing in $k$ and, by~\eqref{eq-GI3} and~\eqref{eq-GI4}, $m^{k,f}(\underline{B})$ is non-decreasing in $k$. Thus, by summing the inequality in~\eqref{eq-concavity} over $k$, we get that
\begin{align}
\ell \left(m^{\ell,+}(\underline{B})-m^{\ell,f}(\underline{B})\right)
& \leq \sum_{k=1}^{\ell} \left(m^{k,+}(\underline{B})-m^{k,f}(\underline{B})\right)
\leq M \sum_{k=1}^{\ell} \left(m^{k}(\underline{B},0)-m^{k-1}(\underline{B},0)\right) \nn\\
&= M \left(m^{\ell}(\underline{B},0)-m^{0}(\underline{B},0)\right)
\leq M,
\end{align}
since $0\leq m^{\ell/0}(\underline{B},0) \leq 1$.
\end{proof}

We are now ready to prove Proposition~\ref{prop-recursion}.
\begin{proof}[Proof of Proposition~\ref{prop-recursion}]
Condition on the tree $\calT(\rho,\infty)$. Then $h^{(t)} \equiv \atanh(m^{t,f}(B))$ satisfies the recursive distribution~\eqref{eq-recursion} because of Lemma~\ref{lem-treepruning}. Since, by the GKS inequality, $m^{t,f}(B)$, and hence also $h^{(t)}$, are monotonically increasing in $t$, we have that $B=h^{(0)} \leq h^{(t)} \leq B+D_0<\infty$ for all $t\geq0$, where $D_0$ is the degree of the root. So, $h^{(t)}$ converges to some limit $\underline{h}$. Since this holds a.s. for any tree $\calT(\rho,\infty)$, the distribution of $\underline{h}$ also exists and one can show that this limit is a fixed point of~\eqref{eq-recursion} (see \cite[Proof of \ch{Lemma~2.3}]{DemMon10}).

In a similar way, $h^{(t,+)} \equiv \atanh(m^{t,+}(B))$ satisfies~\eqref{eq-recursion} when starting with $h^{(0,+)}=\infty$. Then, $h^{(t,+)}$ is monotonically decreasing and, for $t\geq1$, $B\leq h^{(t)} \leq B+D_0<\infty$, so $h^{(t,+)}$ also converges to some limit $\overline{h}$.

Let $h$ be a fixed point of~\eqref{eq-recursion}, condition on this $h$ and let $h^{(0,*)}=h$. Then $h^{(t,*)}$ converges as above to a limit $h^*$ say, when applying~\eqref{eq-recursion}. Note that $h^{(0)} \leq h^{(0,*)} \leq h^{(0,+)}$. Coupling so as to have the same $\{K_t\}_{t\geq1}$ while applying the recursion~\eqref{eq-recursion}, this order is preserved by the GKS inequality, so that $h^{(t)} \leq h^{(t,*)} \leq h^{(t,+)}$ for all $t\geq0$. By Lemma~\ref{lem-45},
\eq
|\tanh(h^{(t)}) - \tanh(h^{(t,+)})| = |m^{t,f}(B)-m^{t,+}(B)| \rightarrow 0, \qquad \textrm{for $t\rightarrow\infty$}.
\en
Since the above holds a.s. for any tree $\calT(\rho,\infty)$ and any realization of $h^*$, the distributions of $\underline{h}, \overline{h}$ and $h^*$ are equal, and, since $h$ is a fixed point of~\eqref{eq-recursion}, are all equal in distribution to $h$.
\end{proof}

\section{Dependence of $\varphi$ on $(\beta,B)$ via $h^*$: proof of Proposition~\ref{prop-phiindeph}}\label{sec-phiindeph}

We will now prove Proposition~\ref{prop-phiindeph} by first bounding the dependence of $\varphi$ on $h^*$ in Lemma~\ref{lem-depphih} and subsequently bounding the dependence of $h^*$ on $\beta$ and $B$ in Lemmas~\ref{lem-dephbeta} and~\ref{lem-dephB} respectively.
\begin{lemma}[Dependence of $\varphi$ on $h^*$]\label{lem-depphih}
Assume that distribution $P$ has strongly finite mean. Fix $B_1,B_2>0$ and $0 < \beta_1, \beta_2 <\infty$. Let $h_{1}^*$ and $h_{2}^*$ be the fixed points of \eqref{eq-recursion} for $(\beta_1,B_1)$ and $(\beta_2,B_2)$, respectively. Let $\varphi_{h^*}(\beta,B)$ be defined as in~\eqref{eq-pressure} with $\{h_i\}_{i\geq1}$ replaced by i.i.d.\ copies of the specified $h^*$. Then, for some $\lambda < \infty$,
\eq
|\varphi_{h_{1}^*}(\beta_1, B_1) - \varphi_{h_{2}^*}(\beta_1, B_1)| \leq \lambda \|\tanh(h_{1}^*)-\tanh(h_{2}^*)\|_{\rm \sss MK}^{\tau-1},
\en
where $\|X-Y\|_{\rm \sss MK}$ denotes the Monge-Kantorovich-Wasserstein distance between random variables $X$ and $Y$, i.e., $\|X-Y\|_{\rm \sss MK}$ is the infimum of $\expec[ |\hat{X}-\hat{Y}|]$ over all couplings $(\hat{X},\hat{Y})$ of $X$ and $Y$.
\end{lemma}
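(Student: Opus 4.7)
The plan is to decompose the difference according to the three summands of~\eqref{eq-pressure} and bound each contribution under an optimal Monge--Kantorovich coupling. Let $\delta := \|\tanh(h_1^*)-\tanh(h_2^*)\|_{\rm \sss MK}$ and fix a coupling $(\hat h_1, \hat h_2)$ of $(h_1^*, h_2^*)$ with $\expec|\tanh \hat h_1 - \tanh \hat h_2| = \delta$, extended to i.i.d.\ copies $\{(\hat h_{1,i}, \hat h_{2,i})\}_{i \geq 1}$ independent of $L\sim P$. The constant summand $\tfrac{\overline P}{2}\log\cosh\beta_1$ cancels outright. The two-spin term is easy: $(x,y)\mapsto \log(1+\tanh\beta_1\,xy)$ is uniformly Lipschitz on $[0,1]^2$, so this piece contributes at most a multiple of $\delta$, which will be absorbed into the final $\delta^{\tau-1}$ estimate.

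The heart of the argument is the $L$-spin term, involving
\[
A_L := e^{B_1}\prod_{i=1}^L (1+\tanh\beta_1 \tanh h_i) + e^{-B_1}\prod_{i=1}^L (1-\tanh\beta_1 \tanh h_i),
\]
and analogously $\tilde A_L$ with $\hat h_{2,i}$'s. My plan is to split the expectation according to $\{L\le \ell_0\}$ versus $\{L>\ell_0\}$, for a cutoff $\ell_0$ optimized at the end. On $\{L>\ell_0\}$, use the trivial bound $|\log A_L - \log\tilde A_L|\le 2L\log 2 + O(B_1)$ together with the tail estimate $\expec[L\,\ind_{\{L> \ell_0\}}]\le C\ell_0^{-(\tau-2)}$, which follows from Definition~\ref{ass-degdist} by summation by parts exactly as in the computation after~\eqref{eq-onedges}. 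On $\{L\le\ell_0\}$, use the Lipschitz estimate $|\log A_L-\log\tilde A_L|\le e^{-B_1}|A_L-\tilde A_L|$ (valid because $A_L,\tilde A_L\ge e^{B_1}$) and control $|A_L-\tilde A_L|$ by telescoping the two products, obtaining an upper bound proportional to $\sum_{i=1}^L|\tanh \hat h_{1,i}-\tanh \hat h_{2,i}|$. Tracking the actual forward degrees (instead of replacing them by their expectations, as in~\cite{DemMon10}) keeps the Lipschitz constant polynomial in $\ell_0$, and the resulting contribution is then a polynomial in $\ell_0$ times $\delta$. Balancing the two regimes by choosing $\ell_0$ appropriately yields the claimed $\delta^{\tau-1}$ rate; the exponent $\tau-1$ is directly traceable to the tail index of $P$ in the large-$L$ estimate.

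The main obstacle is keeping the small-$L$ Lipschitz constant polynomial, not exponential, in $L$: a naive telescoping of $\prod_{i=1}^L (1+\tanh\beta_1\tanh h_i)$ produces a factor $2^L$, which forces $\ell_0\asymp \log(1/\delta)$ and yields only a logarithmic rate. To bypass this, the plan is to exploit the algebraic identity
\[
\log A_L = \log 2 + \log\cosh\!\Bigl(B_1+\sum_{i=1}^L \atanh(\tanh\beta_1\tanh h_i)\Bigr) + \tfrac{1}{2}\sum_{i=1}^L \log\bigl(1-\tanh^2\beta_1\tanh^2 h_i\bigr),
\]
which writes $\log A_L$ as a sum of $L$ terms, each depending on a single $h_i$. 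Since $x\mapsto\atanh(\tanh\beta_1 \,x)$ and $x\mapsto\log(1-\tanh^2\beta_1\, x^2)$ are both Lipschitz on $[0,1]$ with a $\beta_1$-dependent constant, this representation gives $|\log A_L-\log\tilde A_L|\le C(\beta_1)\sum_{i=1}^L |\tanh \hat h_{1,i}-\tanh \hat h_{2,i}|$ without any exponential blow-up in $L$, after which the balance between the small-$L$ contribution and the $\ell_0^{-(\tau-2)}$ tail term produces the advertised rate. This identity-based simplification is also what makes the argument shorter and more transparent than its counterpart in~\cite{DemMon10}.
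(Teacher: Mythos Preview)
Your argument has a genuine gap: it can only deliver a bound of order $\delta$, not $\delta^{\tau-1}$. Since the paper works under $\tau\in(2,3)$, the exponent satisfies $\tau-1>1$, so for small $\delta$ one has $\delta^{\tau-1}\ll\delta$; a bound $C\delta$ is therefore strictly \emph{weaker} than the claimed $\lambda\,\delta^{\tau-1}$, and in particular your remark that the two-spin contribution ``will be absorbed into the final $\delta^{\tau-1}$ estimate'' goes the wrong way. The same issue recurs in the $L$-spin term: your algebraic identity is correct and does give $|\log A_L-\log\tilde A_L|\le C(\beta_1)\sum_{i=1}^L|\tanh\hat h_{1,i}-\tanh\hat h_{2,i}|$ with $C(\beta_1)$ independent of $L$, but then taking expectations yields $C(\beta_1)\,\overline P\,\delta$ with no $\ell_0$-dependence at all. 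Your ``balancing'' therefore degenerates: the small-$L$ piece is $O(\delta)$, the large-$L$ piece (with your trivial bound) is $O(\ell_0^{-(\tau-2)})$, and the optimum is $\ell_0\to\infty$ with total bound $O(\delta)$.

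What is missing is the stationarity of $\varphi$ at the fixed point $h^*$. The paper does \emph{not} treat the $L$-spin and two-spin pieces separately; it packages them into a single function $F_\ell$ (equation~\eqref{eq-defFl}) precisely so that the first-order variation cancels: using $\ell P_\ell=\overline P\rho_{\ell-1}$ together with the fixed-point property of $h^*$ one gets $\expec\bigl[L\,\partial F_L/\partial x_1(x,X_2,\dots,X_L)\bigr]=0$ for every $x$ (equations~\eqref{eq-fixedpoint}--\eqref{eq-ELdF0}). With the first-order term gone, the small-$L$ contribution is controlled by the \emph{second} derivatives and produces $\lambda_2\,\delta^2\,\expec[L^2\ind_{\{L\le\theta\}}]\le C\delta^2\theta^{3-\tau}$, while the large-$L$ contribution (now using the uniform bound on $\partial F_\ell/\partial x_i$, not a trivial bound) gives $C\delta\,\theta^{-(\tau-2)}$. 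Choosing $\theta=\delta^{-1}$ balances both at $\delta^{\tau-1}$. Your decomposition into separate summands of~\eqref{eq-pressure} destroys exactly this cancellation, because the derivative of the $L$-spin term alone does not vanish in expectation --- it is only after subtracting the two-spin contribution that it does.
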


\begin{proof}
Let $X_i$ and $Y_i$ be i.i.d.\ copies of $X=\tanh(h_{1}^*)$ and $Y=\tanh(h_{2}^*)$ respectively and also independent of $L$. When $\|X-Y\|_{\rm \sss MK}=0$ or $\|X-Y\|_{\rm \sss MK}=\infty$, the statement in the lemma clearly holds. Thus, without loss of generality,
we fix $\gamma>1$ and assume that $(X_i,Y_i)$ are i.i.d.\ pairs, independent of $L$, that are coupled in such a way that $\expec|X_i - Y_i| \leq \gamma \|X-Y\|_{\rm \sss MK}<\infty$.

Let $\widehat{\beta}=\tanh(\beta_1)$ and, for $\ell\geq 2$,
\eq \label{eq-defFl}
F_\ell(x_1,\ldots,x_\ell) = \log\left\{ e^B \prod_{i=1}^\ell (1+\widehat{\beta} x_i) + e^{-B}\prod_{i=1}^\ell (1-\widehat{\beta} x_i)\right\} -\frac{1}{\ell-1}\sum_{1\leq i < j \leq \ell} \log(1+\widehat{\beta} x_i x_j),
\en
and let
\begin{align}\label{eq-defF1}
F_1(x_1,x_2) = \frac12 \Big(\log &\big(e^B(1+\widehat{\beta}x_1)+e^{-B}(1-\widehat{\beta}x_1)\big) \nn\\
&+ \log \big(e^B(1+\widehat{\beta} x_2)+e^{-B}(1-\widehat{\beta} x_2)\big)-\log(1+\widehat{\beta} x_1 x_2)\Big).
\end{align}
Then, with $L$ having distribution $P$,
\eq
\varphi_{h_{1}^*}(\beta_1, B_1) = F_0 + \expec[F_L(X_1,\ldots,X_{\max\{2,L\}})] \quad \text{and} \quad \varphi_{h_{2}^*}(\beta_1, B_1) = F_0+\expec[F_L(Y_1,\ldots,Y_{\max\{2,L\}})],
\en
for some constant $F_0$ that is independent of $\beta$ and $B$. In the remainder of the proof we will assume that $F_1$ is defined as in~\eqref{eq-defFl}. The proof, however, also works for $F_1$ as defined in~\eqref{eq-defF1}.

We will split the absolute difference between $\varphi_{h_{1}^*}(\beta_1, B_1)$ and $\varphi_{h_{2}^*}(\beta_1, B_1)$ into two parts depending on whether $L$ is small or large, i.e., for some constant $\theta > 0$ to be chosen later on, we split
\begin{align}
\Big| \expec\big[ F_L (Y_1,\ldots,Y_L) - F_L(X_1,\ldots,X_L) \big] \Big| &\leq \Big| \expec\big[ (F_L (Y_1,\ldots,Y_L) - F_L(X_1,\ldots,X_L)) \ind_{\{L\leq \theta\}} \big] \Big| \nn\\
&\qquad +\Big| \expec\big[ (F_L (Y_1,\ldots,Y_L) - F_L(X_1,\ldots,X_L)) \ind_{\{L > \theta\}}\big] \Big|.
\end{align}
Note that
\begin{align}
F_\ell (Y_1,\ldots,Y_\ell) - F_\ell(X_1,\ldots,X_\ell) &= \int_{0}^{1} \frac{d}{ds} F_\ell(sY_1 + (1-s)X_1, \ldots, sY_\ell + (1-s)X_\ell)\Big|_{s=t} \dint t \nn\\
&= \int_{0}^{1} \sum_{i=1}^{\ell} (Y_i-X_i) \frac{\partial F_\ell}{\partial x_i} (tY_1 + (1-t)X_1, \ldots, tY_\ell + (1-t)X_\ell) \dint t \nn\\
&= \sum_{i=1}^{\ell} (Y_i - X_i) \int_{0}^{1} \frac{\partial F_\ell}{\partial x_i} (tY_1 + (1-t)X_1, \ldots, tY_\ell + (1-t)X_\ell) \dint t.
\end{align}
As observed in \cite[Corollary~6.3]{DemMon10}, $\frac{\partial F_\ell}{\partial x_i}$ is uniformly bounded, so that
\eq
\Big|F_\ell (Y_1,\ldots,Y_\ell) - F_\ell(X_1,\ldots,X_\ell)\Big| \leq \lambda_1 \sum_{i=1}^{\ell} |Y_i - X_i|,
\en
where $\lambda_1$ is allowed to change from line to line. Hence,
\begin{align}
\Big| \expec\big[ (F_L (Y_1,\ldots,Y_L) - F_L(X_1,\ldots,X_L)) \ind_{\{L > \theta\}}\big] \Big| &\leq \expec\left[\sum_{i=1}^{L} |Y_i - X_i| c_1 \ind_{\{L > \theta\}}]\right] \nn\\
&\leq \lambda_1 \|X-Y\|_{\rm \sss MK}\expec[L \ind_{\{L>\theta\}}] .
\end{align}
We compute, using that $L\geq0$,
\begin{align}
\expec[L \ind_{\{L>\theta\}}] &= \sum_{x=1}^{\infty}\prob[L \ind_{\{L>\theta\}}\geq x] = \sum_{x=1}^{\theta+1}\prob[L \ind_{\{L>\theta\}}\geq x] +\sum_{x=\theta+2}^{\infty}\prob[L \ind_{\{L>\theta\}}\geq x] \nn\\
&= \sum_{x=1}^{\theta+1}\prob[L \geq \theta+1] +\sum_{x=\theta+2}^{\infty}\prob[L \geq x]\leq (\theta+1)\cdot c(\theta+1)^{-(\tau-1)} +\sum_{x=\theta+2}^{\infty}c x^{-(\tau-1)} \nn\\
&\leq \lambda_1 \theta^{-(\tau-2)},
\end{align}
so that
\eq \label{eq-boundLbig}
\Big| \expec\big[ (F_L (Y_1,\ldots,Y_L) - F_L(X_1,\ldots,X_L)) \ind_{\{L > \theta\}}\big] \Big| \leq \lambda_1 \|X-Y\|_{\rm \sss MK} \theta^{-(\tau-2)}.
\en

By the fundamental theorem of calculus, we can also write
\eq
F_\ell (Y_1,\ldots,Y_\ell) - F_\ell(X_1,\ldots,X_\ell) = \sum_{i=1}^{\ell} \Delta_i F_\ell + \sum_{i\neq j}^{\ell} (Y_i-X_i)(Y_j-X_j)f_{ij}^{(\ell)},
\en
with
\eq\label{eq-deltaifl}
\Delta_i F_\ell = (Y_i-X_i) \int_0^1 \frac{\partial F_\ell}{\partial x_i} (X_1,\ldots,tY_i+(1-t)X_i,\ldots,X_\ell)\dint t,
\en
and
\eq
f_{ij}^{(\ell)} = \int_0^1 \int_0^t \frac{\partial^2 F_\ell}{\partial x_i \partial x_j} (sY_1 + (1-s)X_1,\ldots, sY_i + (1-s)X_i,\ldots,sY_\ell + (1-s)X_\ell)\dint s \dint t.
\en
Therefore,
\begin{align}
\Big| \expec\big[ (F_L (Y_1,\ldots,Y_L) - F_L(X_1,\ldots,X_L)) \ind_{\{L\leq \theta\}} \big] \Big| &\leq \left|\expec\left[\sum_{i=1}^{L} \Delta_i F_L\ind_{\{L\leq \theta\}}\right]\right| \nn\\
&\qquad +\left|\expec\left[ \sum_{i\neq j}^{L} (Y_i-X_i)(Y_j-X_j)f_{ij}^{(L)}\ind_{\{L\leq \theta\}}\right]\right|.
\end{align}
Since $\frac{\partial^2 F_\ell}{\partial x_i \partial x_j}$ is also uniformly bounded (\cite[Corollary~6.3]{DemMon10}), we obtain
\begin{align}
\left|\expec\left[\sum_{i\neq j}^{L} (Y_i-X_i)(Y_j-X_j)f_{ij}^{(L)}\ind_{\{L\leq \theta\}}\right]\right| &\leq \lambda_2 \expec\left[\sum_{i\neq j}^L |Y_i-X_i||Y_j-X_j|\ind_{\{L\leq\theta\}}\right] \nn\\
&\leq \lambda_2  \|X-Y\|_{\rm \sss MK}^2 \expec[L^2 \ind_{\{L\leq\theta\}}],
\end{align}
where $\lambda_2$ is allowed to change from line to line. The second moment of a non-negative integer-valued random variable $M$, can be written as
\eq
\expec[M^2]=\sum_{x=1}^\infty (2x - 1) \prob[M \geq x],
\en
so that
\begin{align}
\expec[L^2 \ind_{\{L\leq\theta\}}] &= \sum_{x=1}^{\infty} (2x-1) \prob[L \ind_{\{L\leq \theta\}}\geq x] = \sum_{x=1}^{\theta} (2x-1) \prob[L \ind_{\{L\leq \theta\}}\geq x]\nn\\
&\leq \sum_{x=1}^{\theta} 2x \prob[L\geq x] \leq \sum_{x=1}^{\theta} 2x \cdot c x^{-(\tau-1)} \leq \lambda_2 \theta^{-(\tau-3)}.
\end{align}
We split
\eq
\left|\expec\left[ \sum_{i=1}^L \Delta_i F_L \ind_{\{L\leq\theta\}}\right]\right| \leq \left|\expec\left[ \sum_{i=1}^L \Delta_i F_L \right]\right| + \left|\expec\left[ \sum_{i=1}^L \Delta_i F_L \ind_{\{L>\theta\}}\right]\right|.
\en
By symmetry of the functions $F_\ell$ with respect to their arguments, for i.i.d.\ $(X_i, Y_i)$ independent of $L$,
\eq
\expec\left[ \sum_{i=1}^L \Delta_i F_L \right] = \expec\left[L \Delta_1 F_L\right] = \expec\left[L (Y_1 - X_1) \int_{0}^{1} \frac{\partial F_L}{\partial x_1} (t Y_1+(1-t)X_1,X_2,\ldots,X_L) \dint t\right].
\en
Differentiating~\eqref{eq-defFl} gives, for $\ell\geq2$,
\eq
\frac{\partial}{\partial x_1}F_\ell(x_1,\ldots,x_\ell) = \psi(x_1, g_\ell(x_2,\ldots,x_\ell))-\frac{1}{\ell-1}\sum_{j=2}^{\ell}\psi(x_1,x_j),
\en
where $\psi(x,y)= xy / (1+\widehat{\beta}xy)$ and
\eq
g_\ell(x_2,\ldots,x_\ell) = \tanh\left(B+\sum_{j=2}^{\ell} \atanh(\widehat{\beta} x_j)\right),
\en
while differentiating~\eqref{eq-defF1} gives
\eq
\frac{\partial}{\partial x_1}F_\ell(x_1,x_2) = \psi(x_1, g_1)-\psi(x_1,x_2).
\en
Using that $\ell P_\ell = \overline{P} \rho_{\ell-1}$, we have that, with $K$ distributed as $\rho$,
\eq\label{eq-fixedpoint}
\expec[L \psi(X_1, g_L(X_2,\ldots,X_L))] = \overline{P}\expec[ \psi(X_1, g_{K+1}(X_2,\ldots,X_{K+1}))] = \overline{P}\expec[ \psi(X_1, X_2)],
\en
because $g_{K+1}(X_2,\ldots,X_{K+1})$ is a fixed point of~\eqref{eq-recursion}, so that $g_{K+1}(X_2,\ldots,X_{K+1}) \stackrel{d}{=}X_2$ and is independent of $X_1$. Therefore, one can show that
\eq \label{eq-ELdF0}
\expec[L \frac{\partial F_L}{\partial x_1}(x,X_2,\ldots,X_{\max\{2,L\}})] = 0, \qquad \text{for all }x\in[-1,1].
\en
Since $\frac{\partial F_L}{\partial x_1}$ is uniformly bounded, $L \frac{\partial F_L}{\partial x_1}$ is integrable, so that, by Fubini's theorem and~\eqref{eq-ELdF0},
\eq
\expec\left[ \sum_{i=1}^L \Delta_i F_L \right] =  \expec\left[ (Y_1 - X_1) \int_{0}^{1} \expec\left[L\frac{\partial F_L}{\partial x_1} (t Y_1+(1-t)X_1,X_2,\ldots,X_L)\Big|X_1,Y_1\right] \dint t\right] =0.
\en
Furthermore, by~\eqref{eq-deltaifl} and the uniform boundedness of $\frac{\partial F_\ell}{\partial x_i}$,
\eq
\left|\expec\left[ \sum_{i=1}^L \Delta_i F_L \ind_{\{L>\theta\}}\right]\right|\leq \expec\left[\sum_{i=1}^L |Y_i-X_i| c_1 \ind_{\{L>\theta\}}\right] \leq \lambda_1 \|X-Y\|_{\rm \sss MK}\theta^{-(\tau-2)}.
\en
Therefore, we conclude that
\eq
\Big| \expec\big[ (F_L (Y_1,\ldots,Y_L) - F_L(X_1,\ldots,X_L)) \ind_{\{L \leq \theta\}}\big] \Big| \leq  \lambda_1\|X-Y\|_{\rm \sss MK}\theta^{-(\tau-2)}
 + \lambda_2  \|X-Y\|_{\rm \sss MK}^2 \theta^{-(\tau-3)}. \label{eq-boundLsmall}
\en
Combining \eqref{eq-boundLbig} and \eqref{eq-boundLsmall} and letting $\theta = \|X-Y\|_{\rm \sss MK}^{-1}$ yields the desired result.
\end{proof}

\begin{lemma}[Dependence of $h^*$ on $\beta$]\label{lem-dephbeta}
Fix $B>0$ and $0 < \beta_1, \beta_2 \leq \beta_{\max}$. Let $h_{\beta_1}^*$ and $h_{\beta_2}^*$, where we made the dependence of $h^*$ on $\beta$ explicit, be the fixed points of \eqref{eq-recursion} for $(\beta_1,B)$ and $(\beta_2,B)$, respectively. Then, there exists a $\lambda < \infty$ such that
\eq
\|\tanh(h^*_{\beta_1})-\tanh(h^*_{\beta_2})\|_{\rm \sss MK} \leq \lambda |\beta_1-\beta_2|.
\en
\end{lemma}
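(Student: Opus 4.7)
The plan is to perform one step of the distributional recursion on an optimally coupled pair of fixed points and derive a self-bounding inequality for the Monge--Kantorovich--Wasserstein distance. The key is to use the exponential decay of the outer $\tanh$ (via $1/\cosh^2(a)\le 4 e^{-2a}$) to absorb the heavy size-biased tail of $K\sim\rho$, which may have $\overline{\rho}=\infty$ when $\tau\in(2,3)$.

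Concretely, let $(\tilde U^{(j)}_1,\tilde U^{(j)}_2)_{j\ge 1}$ be i.i.d.\ copies of an optimal coupling of $\tanh(h^*_{\beta_1})$ and $\tanh(h^*_{\beta_2})$ realizing the MK distance, and let $K\sim\rho$ be independent. By the fixed-point property from Proposition~\ref{prop-recursion}, setting
\[
V_i=\tanh\!\Bigl(B+\sum_{j=1}^K \atanh\!\bigl(\tanh(\beta_i)\,\tilde U^{(j)}_i\bigr)\Bigr),\qquad i=1,2,
\]
yields $V_i\stackrel{d}{=}\tanh(h^*_{\beta_i})$, so $\|\tanh(h^*_{\beta_1})-\tanh(h^*_{\beta_2})\|_{\rm \sss MK}\le\expec|V_1-V_2|$. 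I would bound $|V_1-V_2|$ pointwise in three moves. First, since any fixed point of~\eqref{eq-recursion} satisfies $h^*\ge B$, the arguments $A_i=B+\sum_j \atanh(\tanh(\beta_i)\tilde U^{(j)}_i)$ of the outer $\tanh$ are both bounded below by $B+K\xi_*$, with $\xi_*=\atanh(\tanh(\min\{\beta_1,\beta_2\})\tanh B)>0$. Since $a\mapsto 1/\cosh^2(a)$ is decreasing on $[0,\infty)$ and bounded by $4e^{-2a}$, the mean-value theorem gives $|V_1-V_2|\le 4 e^{-2(B+K\xi_*)}|A_1-A_2|$. Second, using the partial Lipschitz constants of $(\beta,u)\mapsto\atanh(\tanh(\beta)u)$ on $[0,\beta_{\max}]\times[0,1]$ (namely $|\partial_u|\le c_1:=\tanh(\beta_{\max})/(1-\tanh^2(\beta_{\max}))$ and the sharper $|\partial_\beta|\le u\le 1$) gives
\[
|A_1-A_2|\le c_1\sum_{j=1}^K|\tilde U^{(j)}_1-\tilde U^{(j)}_2|+K|\beta_1-\beta_2|.
\]
Third, taking expectations with independence of $K$ and the $\tilde U^{(j)}_i$, and the optimality of the coupling, yields
\[
\|\tanh(h^*_{\beta_1})-\tanh(h^*_{\beta_2})\|_{\rm \sss MK}\le D\,\|\tanh(h^*_{\beta_1})-\tanh(h^*_{\beta_2})\|_{\rm \sss MK}+D'\,|\beta_1-\beta_2|,
\]
with $D=4c_1 e^{-2B}\,\expec[K e^{-2K\xi_*}]$ and $D'=4e^{-2B}\,\expec[K e^{-2K\xi_*}]$. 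Both are finite since the factor $e^{-2K\xi_*}$ dominates the $k^{-(\tau-1)}$ tail of $\rho_k$ for any $\xi_*>0$.

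Provided $D<1$, the inequality rearranges to the claimed linear bound with $\lambda=D'/(1-D)$. The main obstacle is securing the strict contraction $D<1$ uniformly on the relevant parameter range: for large $\beta_{\max}$, $c_1\sim e^{2\beta_{\max}}/4$ can dominate the exponential gain from $e^{-2B}\expec[K e^{-2K\xi_*}]$. I expect to handle this by iterating the one-step coupling $s$ times before applying the estimate, so that the effective contraction rate accumulates $s$ nested $1/\cosh^2$ factors, each attached to an independent copy of $K$, and can therefore be driven below $1$ by taking $s$ large, while the perturbative contributions of order $|\beta_1-\beta_2|$ telescope to the same linear Lipschitz estimate; the distributional identity $V^{(s)}_i\stackrel{d}{=}\tanh(h^*_{\beta_i})$ at every $s$ keeps the left-hand side unchanged under iteration.
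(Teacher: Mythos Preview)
Your approach is genuinely different from the paper's, but it has a real gap that your iteration sketch does not close.

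The self-bounding inequality $d\le Dd+D'|\beta_1-\beta_2|$ is only useful when $D<1$. You correctly note that $D=4c_1e^{-2B}\expec[Ke^{-2K\xi_*}]$ can exceed $1$: for $\beta_{\max}$ large one has $c_1=\tfrac12\sinh(2\beta_{\max})\sim\tfrac14 e^{2\beta_{\max}}$, whereas $e^{-2B}\expec[Ke^{-2K\xi_*}]$ is of order one (e.g.\ $K\equiv d-1$, $B$ small). The proposed fix, however, does not work as described. If you carry the coupling $s$ levels deep and then apply exactly the same pointwise bounds at every level, the chain rule and independence of the offspring variables give, after taking expectations,
\[
d\ \le\ D^{s}\,d\ +\ \Bigl(\sum_{k=0}^{s-1}D^{k}\Bigr)D'\,|\beta_1-\beta_2|,
\]
which is the same inequality iterated $s$ times; the ``accumulated $1/\cosh^2$ factors'' you refer to are precisely the factors that produced $D$ in the first place, and their product in expectation is $D^s$. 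When $D\ge 1$ this is worse, not better. A genuine improvement would require using the \emph{actual} derivative $\theta(1-\tanh^2 A_v)/(1-\theta^2\tanh^2 A_v)$ at each internal node and showing that, at the random fixed point, the expected branching-times-derivative is strictly below $1$; this is essentially a statement that the fixed point of the random recursion is attracting in an $L^1$ sense, and it is neither what you wrote nor easy to extract from your estimates (the crude lower bound $A_v\ge B+K_v\xi_*$ is too weak for this).

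The paper avoids this contraction issue entirely. It couples $\tanh(h^*_\beta)$ between the free and plus root magnetizations $m^{\ell,f}_\beta(B)\le\tanh(h^*_\beta)\le m^{\ell,+}_\beta(B)$ on $\calT(\rho,\ell)$, uses Lemma~\ref{lem-45} to kill $m^{\ell,+}-m^{\ell,f}$ as $\ell\to\infty$, and then bounds $\partial m^{\ell,f}/\partial\beta$ uniformly in $\ell$ via the GHS inequality and a telescoping sum over generations (each generation's contribution is bounded by an increment $m^{k+1}(B,0)-m^{k}(B,0)$, so the total is at most $1/\xi$). No contraction factor is ever required, and the argument works for all $\beta\in(0,\beta_{\max}]$, including the low-temperature regime where your $D$ blows up.
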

\begin{proof}
For a given tree $\calT(\rho,\infty)$ we can, as in the proof of Proposition~\ref{prop-recursion}, couple
$\tanh(h^*_\beta)$ to the root magnetizations $m_\beta^{\ell,f/+}(B)$ such that, for all $\beta \geq 0$ and $\ell \geq 0$,
\eq
m_\beta^{\ell,f}(B) \leq \tanh(h^*_\beta) \leq m_\beta^{\ell,+}(B),
\en
where we made the dependence of $m^{\ell,f/+}$ on $\beta$ explicit. Without loss of generality, we assume that $0 < \beta_1 \leq \beta_2 \leq \beta_{\max}$. Then, by the GKS inequality,
\eq
|\tanh(h^*_{\beta_2})-\tanh(h^*_{\beta_1})| \leq m_{\beta_2}^{\ell,+}(B) - m_{\beta_1}^{\ell,f}(B) = m_{\beta_2}^{\ell,+}(B) - m_{\beta_2}^{\ell,f}(B) + m_{\beta_2}^{\ell,f}(B)- m_{\beta_1}^{\ell,f}(B).
\en
By Lemma~\ref{lem-45}, a.s.,
\eq
m_{\beta_2}^{\ell,+}(B) - m_{\beta_2}^{\ell,f}(B) \leq \frac{M}{\ell},
\en
for some $M < \infty$. Since $m_{\beta}^{\ell,f}(B)$ is non-decreasing in $\beta$ by the GKS inequality,
\eq
m_{\beta_2}^{\ell,f}(B)- m_{\beta_1}^{\ell,f}(B) \leq (\beta_2-\beta_1) \sup_{\beta_1 \leq \beta \leq \beta_{\max}} \frac{\partial m^{\ell,f}}{\partial \beta}.
\en
Letting $\ell \rightarrow \infty$, it thus suffices to show that $\partial m^{\ell,f}/ \partial \beta$ is, a.s., bounded uniformly in $\ell$ and $0<\beta_1 \leq \beta \leq \beta_{\max}$.

From \cite[\ch{Lemma~4.6}]{DemMon10} we know that
\eq
\frac{\partial }{\partial \beta}m^{\ell,f}(\beta,B) \leq \sum_{k=0}^{\ell-1} V_{k,\ell},
\en
with
\eq
V_{k,\ell} = \sum_{i \in \partial \calT(k)} \Delta_i \frac{\partial}{\partial B_i} m^{\ell}(\underline{B},0)\Big|_{\underline{B}=B}.
\en
By Lemma~\ref{lem-treepruning} and the GHS inequality,
\eq
\frac{\partial}{\partial B_i} m^{\ell}(\underline{B},0) = \frac{\partial}{\partial B_i} m^{\ell-1}(\underline{B},\underline{H}) \leq \frac{\partial}{\partial B_i} m^{\ell-1}(\underline{B},0),
\en
for some field $\underline{H}$, so that $V_{k,\ell}$ is non-increasing in $\ell$. We may assume that $B_i\geq B_{\min}$ for all $i \in \calT(\ell)$ for some $B_{\min}$. Thus, also using Lemma~\ref{lem-treepruning},
\begin{align}
V_{k,\ell} \leq V_{k,k+1}&=\sum_{i \in \partial \calT(k)}\Delta_i \frac{\partial}{\partial B_i} m^{k+1}(\underline{B},0)\Big|_{\underline{B}=B} \leq \sum_{i \in \partial \calT(k)}\Delta_i \frac{\partial}{\partial B_i} m^{k}(\underline{B},\xi\{\Delta_i\})\Big|_{\underline{B}=B} \nn\\
&= \frac{\partial}{\partial H} m^{k}(B,H \{\Delta_i\})\Big|_{H=\xi(\beta,B_{\min})},
\end{align}
where $\xi=\xi(\beta,B_{\min})$ is defined in~\eqref{eq-defxi}. By the GHS inequality this derivative is non-increasing in $H$, so that, by Lemma~\ref{lem-treepruning}, the above is at most
\eq
\frac{1}{\xi} \left[m^{k}(B, \xi \{\Delta_i\}) - m^{k}(B, 0)\right]
\leq \frac{1}{\xi} \left[m^{k+1}(B, 0) - m^{k}(B, 0)\right].
\en
Therefore,
\eq
\frac{\partial }{\partial \beta}m^{\ell,f}(\beta,B) \leq \sum_{k=0}^{\ell-1} V_{k,\ell} \leq \frac{1}{\xi} \sum_{k=0}^{\ell-1} \left[m^{k+1}(B, 0) - m^{k}(B, 0)\right] \leq \frac{1}{\xi}<\infty,
\en
for $0<\beta_1 \leq \beta \leq \beta_{\max}$.
\end{proof}

\begin{lemma}[Dependence of $h^*$ on $B$]\label{lem-dephB}
Fix $\beta \geq 0$ and $B_1, B_2 \geq B_{\min}>0$. Let $h_{B_1}^*$ and $h_{B_2}^*$, where we made the dependence of $h^*$ on $B$ explicit, be the fixed points of \eqref{eq-recursion} for $(\beta,B_1)$ and $(\beta,B_2)$, respectively. Then, there exists a $\lambda < \infty$ such that
\eq
\|\tanh(h^*_{B_1})-\tanh(h^*_{B_2})\|_{\rm \sss MK} \leq \lambda |B_1-B_2|.
\en
\end{lemma}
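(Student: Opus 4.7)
The plan is to mimic the blueprint of Lemma~\ref{lem-dephbeta}. I couple $h^*_{B_1}$ and $h^*_{B_2}$ on a common realization of the tree $\calT(\rho,\infty)$ by running the recursion~\eqref{eq-recursion} with the same offspring variables $\{K_t\}$ but with starting values $B_1$ and $B_2$, so that, as in the proof of Proposition~\ref{prop-recursion},
\begin{equation}
m^{\ell,f}_{B_i}(B_i) \leq \tanh(h^*_{B_i}) \leq m^{\ell,+}_{B_i}(B_i) \qquad \text{for every } \ell \geq 1.
\end{equation}
Assuming without loss of generality that $B_1 \leq B_2$, GKS monotonicity gives $\tanh(h^*_{B_1}) \leq \tanh(h^*_{B_2})$, hence
\begin{equation}
0 \leq \tanh(h^*_{B_2}) - \tanh(h^*_{B_1}) \leq \bigl[m^{\ell,+}(B_2) - m^{\ell,f}(B_2)\bigr] + \bigl[m^{\ell,f}(B_2) - m^{\ell,f}(B_1)\bigr].
\end{equation}
Lemma~\ref{lem-45} controls the first bracket by $M/\ell$ with $M=M(\beta,B_{\min})<\infty$, and the mean-value theorem together with GKS monotonicity reduces the second bracket to $(B_2 - B_1)\sup_{B_{\min} \leq B \leq B_2}\partial m^{\ell,f}(B)/\partial B$. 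The task is therefore to bound this supremum uniformly in $\ell$ and in the realization of $\calT(\rho,\infty)$.

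This is the one point where the proof genuinely departs from, and actually simplifies, that of Lemma~\ref{lem-dephbeta}. Because $B$ enters as an honest external field, the GHS inequality (Lemma~\ref{lem-GHS}) applied to each pair $(i,j)$ and then summed over $i,j\in\calT(\ell)$ yields
\begin{equation}
\frac{d^2}{dB^2}m^{\ell,f}(B) = \sum_{i,j \in \calT(\ell)}\frac{\partial^2 m^\ell(\underline{B})}{\partial B_i \partial B_j}\bigg|_{\underline{B}=B} \leq 0,
\end{equation}
so that $B \mapsto m^{\ell,f}(B)$ is concave on $[0,\infty)$. Combined with $m^{\ell,f}(0)=0$ (spin-flip symmetry in zero field) and $m^{\ell,f}(B)\leq 1$, the chord-above-tangent inequality for concave functions delivers
\begin{equation}
\frac{\partial m^{\ell,f}(B)}{\partial B} \leq \frac{m^{\ell,f}(B) - m^{\ell,f}(0)}{B} \leq \frac{1}{B} \leq \frac{1}{B_{\min}},
\end{equation}
for every $B\geq B_{\min}$, uniformly in $\ell$ and in the realization of the tree. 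In contrast to the $\beta$-case, no level-wise telescoping or pruning over forward degrees is needed.

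Combining the two pieces and letting $\ell\to\infty$ yields $|\tanh(h^*_{B_2})-\tanh(h^*_{B_1})| \leq (B_2-B_1)/B_{\min}$ almost surely on the tree. Since the construction above is an honest coupling of the laws of $\tanh(h^*_{B_1})$ and $\tanh(h^*_{B_2})$, taking expectations yields the desired bound with $\lambda=1/B_{\min}$. There is no serious obstacle; the main conceptual point is that GHS acts directly on the variable $B$ (whereas $\beta$ is not an external field), which is exactly what makes this $B$-derivative bound essentially free compared with its $\beta$-counterpart. The only items requiring a moment of care are verifying that the GHS-based concavity statement is applied conditionally on the realization of the random tree, and that $m^{\ell,f}(0)=0$ still holds in this random-tree, free-boundary setting; both are immediate from Lemmas~\ref{lem-Griffith}--\ref{lem-GHS}.
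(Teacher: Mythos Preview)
Your proof is correct and follows essentially the same route as the paper's: couple via the tree, split into the boundary-condition gap handled by Lemma~\ref{lem-45} and the $B$-increment handled by a uniform bound on $\partial m^{\ell,f}/\partial B$, the latter obtained from GHS concavity. The only cosmetic difference is in the final step: the paper bounds the derivative by the secant over $[B_{\min}/2,B_{\min}]$, yielding $2/B_{\min}$, whereas you use the secant over $[0,B]$ together with $m^{\ell,f}(0)=0$ to get the slightly sharper constant $1/B_{\min}$.
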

\begin{proof}
This lemma can be proved along the same lines as Lemma~\ref{lem-dephbeta}. Therefore, for a given tree $\calT(\rho,\infty)$, we can couple
$\tanh(h^*_B)$ to the root magnetizations $m^{\ell,f/+}(B)$ such that, for all $B > 0$ and $\ell \geq 0$,
\eq
m^{\ell,f}(B) \leq \tanh(h^*_B) \leq m^{\ell,+}(B).
\en
Without loss of generality, we assume that $0 < B_{\min} \leq B_1 \leq B_2$. Then, by the GKS inequality,
\eq
|\tanh(h^*_{B_2})-\tanh(h^*_{B_1})| \leq m^{\ell,+}(B_2) - m^{\ell,f}(B_1) = m^{\ell,+}(B_2) - m^{\ell,f}(B_2) + m^{\ell,f}(B_2)- m^{\ell,f}(B_1).
\en
By Lemma~\ref{lem-45}, a.s.,
\eq
m^{\ell,+}(B_2) - m^{\ell,f}(B_2) \leq \frac{M}{\ell},
\en
for some $M < \infty$. Since $m^{\ell,f}(B)$ is non-decreasing in $B$ by the GKS inequality,
\eq
m^{\ell,f}(B_2)- m^{\ell,f}(B_1) \leq (B_2-B_1) \sup_{B \geq B_{\min} > 0} \frac{\partial m^{\ell,f}}{\partial B}.
\en
Letting $\ell \rightarrow \infty$, it thus suffices to show that $\partial m^{\ell,f}/ \partial B$ is bounded uniformly in $\ell$ and $B \geq B_{\min} > 0$.
This follows from the GHS inequality:
\eq
\sup_{B \geq B_{\min} > 0} \frac{\partial m^{\ell,f}}{\partial B} \leq \left.\frac{\partial m^{\ell,f}}{\partial B}\right|_{B=B_{\min}} \leq \frac{2}{B_{\min}} \left[m^{\ell,f}(B_{\min})-m^{\ell,f}(B_{\min} / 2)\right] \leq \frac{2}{B_{\min}} < \infty.
\en
\end{proof}

\section{Convergence of the internal energy: proof of Proposition~\ref{prop-dpressure}}\label{sec-dpressure}

We shall start by identifying the thermodynamic limit of the intensive internal energy:
\begin{lemma}[From graphs to trees]\label{lem-dpsi}
Assume that the graph sequence $\{G_n\}_{n\geq1}$ is locally tree-like with asymptotic degree distribution $P$, where $P$ has finite mean, and is uniformly sparse. Then, a.s.,
\eq
\lim_{n\rightarrow\infty}\frac{\partial}{\partial \beta} \psi_n(\beta,B) = \frac{\overline{P}}{2} \expec\left[\big<\sigma_1\sigma_2\big>_{\nu'_2}\right],
\en
where $\nu'_2$ is defined in~\eqref{eq-defnu2}.
\end{lemma}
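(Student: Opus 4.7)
The plan is to differentiate $\psi_n$ and show that the resulting edge-average of correlations converges to $\expec[\big<\sigma_1\sigma_2\big>_{\nu'_2}]$ by a truncation-to-trees argument. Direct differentiation gives
\eq
\frac{\partial}{\partial\beta}\psi_n(\beta,B)=\frac{1}{n}\sum_{(i,j)\in E_n}\big<\sigma_i\sigma_j\big>_{\mu_n}=\frac{|E_n|}{n}\cdot\frac{1}{|E_n|}\sum_{(i,j)\in E_n}\big<\sigma_i\sigma_j\big>_{\mu_n},
\en
and \eqref{eq-onedges} yields $|E_n|/n\to\overline{P}/2$ almost surely. Thus it suffices to prove that, averaged over a uniformly chosen edge, $\big<\sigma_i\sigma_j\big>_{\mu_n}$ converges, a.s., to $\expec[\big<\sigma_1\sigma_2\big>_{\nu'_2}]$.

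A first step is to upgrade Definition~\ref{ass-convtree} to an edge-centered version: the ball of radius $t$ around a uniformly chosen edge $e=(i,j)$, denoted $\mathcal{B}_e(t)$, converges in distribution to two independent copies of $\calT(\rho,t)$ joined at their roots by a single edge, and is a tree with probability tending to one. This is a standard consequence of the vertex-centered local convergence together with the observation that picking a uniformly chosen edge amounts to picking a uniformly chosen half-edge, whose endpoint has size-biased degree, so each forward neighborhood is distributed as $\calT(\rho,t)$ in the limit.

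For fixed $t$, I would sandwich $\big<\sigma_i\sigma_j\big>_{\mu_n}$ using the GKS inequality (Lemma~\ref{lem-Griffith}). Removing all edges of $G_n$ lying outside $\mathcal{B}_e(t)$ decreases the correlation, while additionally imposing $+$ boundary conditions on the vertices at graph distance $t$ from $e$ increases it, so
\eq
\big<\sigma_i\sigma_j\big>_{\mu_e^{t,f}}\leq \big<\sigma_i\sigma_j\big>_{\mu_n}\leq \big<\sigma_i\sigma_j\big>_{\mu_e^{t,+}},
\en
where $\mu_e^{t,f/+}$ is the Ising measure on $\mathcal{B}_e(t)$ with free/$+$ boundary conditions. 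On the event that $\mathcal{B}_e(t)$ is a tree, Lemma~\ref{lem-treepruning} lets me marginalize the two subtrees hanging off $i$ and $j$ (with the edge $(i,j)$ removed) down to the edge $\{i,j\}$, rewriting each of these correlations as $\big<\sigma_1\sigma_2\big>$ in a two-spin Ising measure of the form \eqref{eq-defnu2}, with effective external fields $h_i^{(t,f/+)}$ and $h_j^{(t,f/+)}$ that are precisely the iterates of the recursion~\eqref{eq-recursion} after $t$ steps, starting from $h^{(0,f)}\equiv B$ or $h^{(0,+)}\equiv+\infty$, driven by the actual forward degrees along each subtree.

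Letting $t\to\infty$, Proposition~\ref{prop-recursion} together with Lemma~\ref{lem-45} pins both $h_i^{(t,f)}$ and $h_i^{(t,+)}$ to the same limiting distribution $h^*$, independently of the analogous fields at $j$, so the two sides of the sandwich coincide in the limit at $\expec[\big<\sigma_1\sigma_2\big>_{\nu'_2}]$. Combined with $|E_n|/n\to\overline{P}/2$, this yields the claim. The main obstacle is carrying out the edge-average simultaneously with these two limits: the local convergence controls only the distribution of the ball around a typical edge, and I need to upgrade this to almost sure convergence of the empirical edge-average. I would handle this by writing the edge-average as a $\prob_n$-expectation of a bounded functional of the local neighborhood, invoking the a.s.\ convergence in Definition~\ref{ass-convtree} on each cylinder event, and controlling edges incident to high-degree vertices via uniform sparsity (Definition~\ref{ass-unisparse}), with $|\big<\sigma_i\sigma_j\big>_{\mu_n}|\leq 1$ justifying the bounded convergence.
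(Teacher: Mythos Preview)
Your proposal is correct and follows essentially the same route as the paper: differentiate, factor out $|E_n|/n\to\overline{P}/2$, sandwich the edge-correlation via GKS between free and $+$ boundary conditions on the radius-$t$ ball around a uniform edge, use edge-centered local convergence to the two-rooted tree $\overline{\calT}(\rho,t)$ (with uniform sparsity handling high-degree contributions), and then let $t\to\infty$ via Lemma~\ref{lem-treepruning} and Proposition~\ref{prop-recursion}. The paper defers the technical step of a.s.\ convergence of the empirical edge-average to \cite[Lemma~6.4]{DemMon10}, which is precisely the point you flag as the ``main obstacle'' and whose resolution you sketch correctly.
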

Lemma~\ref{lem-dpsi} shall be proved in Section~\ref{sec-dpsi}. Next, we will compute the derivative of $\varphi(\beta,B)$ with respect to $\beta$ in the following lemma and show that it equals the one on the graph:
\begin{lemma}[Tree analysis]\label{lem-dphi} Assume that distribution $P$ has strongly finite mean. Then,
\eq
\frac{\partial}{\partial \beta} \varphi(\beta,B) = \frac{\overline{P}}{2} \expec\left[\big<\sigma_1\sigma_2\big>_{\nu'_2}\right],
\en
where $\nu'_2$ is defined in~\eqref{eq-defnu2}.
\end{lemma}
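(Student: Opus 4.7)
The approach is in two steps: first, I would show that $\partial_\beta\varphi(\beta,B)$ may be computed while holding $h^*$ fixed (an envelope-type argument based on Proposition~\ref{prop-phiindeph}(a)); then I would evaluate this partial derivative explicitly, using the size-biasing identity $\ell P_\ell = \overline{P}\rho_{\ell-1}$ together with the fixed-point property of $h^*$ from Proposition~\ref{prop-recursion}.

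For the first step, write $h^*_\beta$ for the fixed point at inverse temperature $\beta$ (with $B$ held fixed) and decompose
\eq
\varphi(\beta+\epsilon,B)-\varphi(\beta,B) = \bigl[\varphi_{h^*_{\beta+\epsilon}}(\beta+\epsilon,B) - \varphi_{h^*_{\beta}}(\beta+\epsilon,B)\bigr] + \bigl[\varphi_{h^*_{\beta}}(\beta+\epsilon,B) - \varphi_{h^*_{\beta}}(\beta,B)\bigr].
\en
The first bracket is $O(\epsilon^{\tau-1}) = o(\epsilon)$ by Proposition~\ref{prop-phiindeph}(a) since $\tau-1>1$, while the second, divided by $\epsilon$, converges to $\partial_\beta\varphi_{h^*}(\beta,B)$ with $h^*$ treated as a frozen random variable.

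For the second step, abbreviate $\hat\beta := \tanh\beta$, $Y_i := \tanh h_i$, and
\eq
R_L := e^B\prod_{i=1}^{L}(1+\hat\beta Y_i) + e^{-B}\prod_{i=1}^{L}(1-\hat\beta Y_i).
\en
Differentiating the first two terms of~\eqref{eq-pressure} trivially gives $\tfrac{\overline{P}}{2}\hat\beta$ and $-\tfrac{\overline{P}}{2}(1-\hat\beta^2)\,\expec[Y_1 Y_2/(1+\hat\beta Y_1 Y_2)]$. The third term $\expec[\log R_L]$ requires more work: its $\hat\beta$-derivative, by i.i.d.\ symmetry of the $Y_i$, collapses to $L$ times the $i=1$ contribution, and the size-biasing identity $\ell P_\ell = \overline{P}\rho_{\ell-1}$ then replaces $L$ by $K+1$ with $K\sim\rho$ independent of $\{Y_j\}$. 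Setting $P_\pm := e^{\pm B}\prod_{j=2}^{K+1}(1\pm\hat\beta Y_j)$, this yields
\eq
\frac{\partial}{\partial\hat\beta}\expec[\log R_L] = \overline{P}\,\expec\!\left[\frac{Y_1(P_+-P_-)}{P_++P_- + \hat\beta Y_1(P_+-P_-)}\right] = \overline{P}\,\expec\!\left[\frac{Y_1 Y'}{1+\hat\beta Y_1 Y'}\right],
\en
where $Y':=(P_+-P_-)/(P_++P_-)$. A direct manipulation shows $\tfrac12\log(P_+/P_-) = B+\sum_{j=2}^{K+1}\atanh(\hat\beta Y_j)$, so $Y' = \tanh(B+\sum_{j=2}^{K+1}\atanh(\hat\beta Y_j))$; by the fixed-point relation of Proposition~\ref{prop-recursion}, $Y' \stackrel{d}{=} \tanh h^*$ and is independent of $Y_1$, so $Y'$ may be relabelled as an i.i.d.\ copy $Y_2$.

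Multiplying by $d\hat\beta/d\beta=1-\hat\beta^2$ and combining the three contributions, the coefficients $-\tfrac{\overline{P}}{2}(1-\hat\beta^2)$ and $\overline{P}(1-\hat\beta^2)$ merge into $\tfrac{\overline{P}}{2}(1-\hat\beta^2)$, and a short algebraic simplification gives
\eq
\frac{\partial}{\partial\beta}\varphi(\beta,B) = \frac{\overline{P}}{2}\expec\!\left[\hat\beta + \frac{(1-\hat\beta^2)Y_1 Y_2}{1+\hat\beta Y_1 Y_2}\right] = \frac{\overline{P}}{2}\expec\!\left[\frac{\tanh\beta+\tanh h_1\tanh h_2}{1+\tanh\beta\tanh h_1\tanh h_2}\right],
\en
which coincides with $\tfrac{\overline{P}}{2}\expec[\langle\sigma_1\sigma_2\rangle_{\nu'_2}]$ by the direct one-edge computation $Z_2(\beta,h_1,h_2) = 4\cosh\beta\cosh h_1\cosh h_2\,(1+\tanh\beta\tanh h_1\tanh h_2)$. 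The main obstacle is the third-term calculation, in particular the recognition of $(P_+-P_-)/(P_++P_-)$ as an independent copy of $\tanh h^*$ via the fixed-point equation; the envelope step of the first part is brief but hinges essentially on the Hölder bound of Proposition~\ref{prop-phiindeph}(a).
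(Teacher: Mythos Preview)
Your proposal is correct and follows essentially the same approach as the paper: invoke Proposition~\ref{prop-phiindeph}(a) as an envelope argument to freeze $h^*$, then differentiate~\eqref{eq-pressure} and exploit the size-biasing identity $\ell P_\ell=\overline{P}\rho_{\ell-1}$ together with the fixed-point property of $h^*$ (the paper packages these last two steps via the functions $F_\ell$ of~\eqref{eq-defFl} and the identity~\eqref{eq-fixedpoint}, whereas you unwind the third-term derivative directly). The computation and final identification with $\expec[\langle\sigma_1\sigma_2\rangle_{\nu'_2}]$ are identical in substance.
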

Lemma~\ref{lem-dphi} shall be proved in Section~\ref{sec-dphi}. Lemmas~\ref{lem-dpsi} and~\ref{lem-dphi} clearly imply Proposition~\ref{prop-dpressure}.
\qed

\subsection{From graphs to trees: proof of Lemma~\ref{lem-dpsi}}\label{sec-dpsi}
This lemma can be proved as in \cite{DemMon10}. The idea is to note that
\eq \label{eq-rewritepsi}
\frac{\partial}{\partial \beta} \psi_n(\beta,B) = \frac{1}{n}\sum_{(i,j)\in E_n} \big<\sigma_i \sigma_j\big>_{\mu_n} =  \frac{|E_n|}{n}\frac{\sum_{(i,j)\in E_n} \big<\sigma_i \sigma_j\big>_{\mu_n}}{|E_n|}.
\en
By the local convergence and the uniform sparsity, we have that, a.s. (see \eqref{eq-onedges}),
\eq
\lim_{n\rightarrow\infty}\frac{|E_n|}{n} = \overline{P}/2.
\en
The second term of the right hand side of~\eqref{eq-rewritepsi} can be seen as the expectation with respect to a uniformly chosen edge $(i,j)$ of the correlation $\left<\sigma_i\sigma_j\right>_{\mu_n}$. For a uniformly chosen edge $(i,j)$, denote by $B_{(i,j)}(t)$ all vertices at distance from either vertex $i$ or $j$ at most $t$, and let $\partial B_{(i,j)}(t) = B_{(i,j)}(t) \setminus B_{(i,j)}(t-1)$. By the GKS inequality, for any $t\geq 1$,
\eq
\left<\sigma_i\sigma_j\right>^f_{B_{(i,j)}(t)} \leq \left<\sigma_i\sigma_j\right>_{\mu_n} \leq \left<\sigma_i\sigma_j\right>^+_{B_{(i,j)}(t)},
\en
where $\left<\sigma_i\sigma_j\right>^{+/f}_{B_{(i,j)}(t)}$ is the correlation in the Ising model on $B_{(i,j)}(t)$ with $+$/free boundary conditions on $\partial B_{(i,j)}(t)$.

Let $\overline{\mathcal{T}}(\rho,t)$ be the tree formed by joining the roots, $\phi_1$ and $\phi_2$, of two branching processes with $t$ generations and with offspring $\rho$ at each vertex, also at the roots. Then, taking $n\rightarrow\infty$, $B_{(i,j)}(t)$ converges to $\overline{\mathcal{T}}(\rho,t)$, because of the local convergence of the graph sequence. After all, a random edge can be chosen, by first picking a vertex with probability proportional to its degree, and then selecting a neighbor uniformly at random. Using this, one can show (see \cite[Lemma~6.4]{DemMon10}), also using the uniform sparsity, that, for all $t\geq1$, a.s.,
\eq
\lim_{n\rightarrow\infty} \expec_{(i,j)} \left[ \left<\sigma_i\sigma_j\right>^{+/f}_{B_{(i,j)}(t)} \right] = \expec\left[\left<\sigma_{\phi_1}\sigma_{\phi_2}\right>^{+/f}_{\overline{\mathcal{T}}(\rho,t)}\right],
\en
where the first expectation is with respect to a uniformly at random chosen edge $(i,j)\in E_n$ and the second expectation with respect to the tree $\overline{\mathcal{T}}(\rho,t)$. By Lemma~\ref{lem-treepruning} and Proposition~\ref{prop-recursion},
\eq
\lim_{t\rightarrow\infty}\expec\left[\left<\sigma_{\phi_1}\sigma_{\phi_2}\right>^{+/f}_{\overline{\mathcal{T}}(\rho,t)}\right] = \expec\left[\big<\sigma_1\sigma_2\big>_{\nu'_2}\right],
\en
thus proving the lemma.
\qed

\subsection{Tree analysis: proof of Lemma~\ref{lem-dphi}}\label{sec-dphi}
Let $X_i, i\geq 1$, be i.i.d.\ copies of $\tanh(h^*)$, also independent of $L$. Then, with $L$ having distribution $P$ and $F_\ell$ defined in~\eqref{eq-defFl} and~\eqref{eq-defF1},
\eq\label{eq-phiisF}
\varphi(\beta, B) = F_0 + \expec[F_L(X_1,\ldots,X_{\max\{2,L\}})],
\en
for some constant $F_0$ that is independent of $\beta$ and $B$.

From Proposition~\ref{prop-phiindeph} it follows that we can assume that $\beta$ is fixed in $h^*$ when differentiating $\varphi(\beta, B)$ with respect to $\beta$. Thus, taking the derivative of~\eqref{eq-phiisF} and using~\eqref{eq-fixedpoint}, one can show that
\eq
\frac{\partial}{\partial \beta} \varphi(\beta, B) = \frac{\overline{P}}{2} \widehat{\beta} + \frac{\overline{P}}{2} \expec \left[\psi(X_1, X_2)\right] =\frac{\overline{P}}{2} \expec \left[\widehat{\beta} + \frac{X_1 X_2}{1+\widehat{\beta} X_1 X_2}\right] = \frac{\overline{P}}{2} \expec \left[\frac{\widehat{\beta}+X_1 X_2}{1+\widehat{\beta} X_1 X_2}\right].
\en
Since, with $h_1,h_2$ i.i.d.\ copies of $h^*$,
\begin{align}
\expec\left[\frac{\widehat{\beta}+X_1 X_2}{1+\widehat{\beta} X_1 X_2}\right] &= \expec\left[\frac{\tanh(\beta)+\tanh(h_1)\tanh(h_2)}{1+\tanh(\beta) \tanh(h_1)\tanh(h_2)}\right] \nn\\
&= \expec\left[\frac{e^{\beta+h_1+h_2}-e^{-\beta-h_1+h_2}-e^{-\beta+h_1-h_2}+e^{\beta-h_1-h_2}}{e^{\beta+h_1+h_2}+e^{-\beta-h_1+h_2}+e^{-\beta+h_1-h_2}+e^{\beta-h_1-h_2}}\right] =\expec\left[\big<\sigma_1\sigma_2\big>_{\nu'_2}\right],
\end{align}
where $\nu'_2$ is given in \eqref{eq-defnu2}, we have proved the lemma.

\qed

\section{Thermodynamic quantities: proofs of Theorem~\ref{thm-thermqphi} and Corollary~\ref{cor-thermq}}\label{sec-thermq}

To prove the statements in Theorem~\ref{thm-thermqphi} we need to show that we can interchange the limit of $n\rightarrow\infty$ and the derivatives of the finite volume pressure. We can do this using the monotonicity properties of the Ising model and the following lemma:
\begin{lemma}[Interchanging limits and derivatives]\label{lem-interchange}
Let $\{f_n(x)\}_{n\geq1}$ be a sequence of functions that are twice differentiable in $x$. Assume that
\begin{itemize}
\item[(i)] $\lim_{n\rightarrow\infty} f_n(x)=f(x)$ for some function $x\mapsto f(x)$ which is differentiable in $x$;
\item[(ii)] $\frac{d}{dx} f_n(x)$ is monotone in $[x-h,x+h]$ for all $n\geq1$ and some $h>0$.
\end{itemize}
Then,
\eq
\lim_{n\rightarrow\infty} \frac{\dint}{\dint x}f_n(x) = \frac{\dint}{\dint x} f(x).
\en
\end{lemma}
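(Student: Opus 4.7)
The plan is to sandwich $\frac{d}{dx}f_n(x)$ between two one-sided difference quotients of $f_n$ by using the mean value theorem together with the monotonicity hypothesis (ii), and then exchange the limits $n\to\infty$ and $\delta\downarrow 0$ in that order.

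Assume without loss of generality that $\frac{d}{dx}f_n$ is non-decreasing on $[x-h,x+h]$ for every $n\geq1$, the non-increasing case being symmetric. Fix $\delta\in(0,h)$. By the mean value theorem applied to $f_n$ on $[x,x+\delta]$ and on $[x-\delta,x]$, there exist $\xi_n\in[x,x+\delta]$ and $\eta_n\in[x-\delta,x]$ with
\[
f_n'(\xi_n)=\frac{f_n(x+\delta)-f_n(x)}{\delta},\qquad f_n'(\eta_n)=\frac{f_n(x)-f_n(x-\delta)}{\delta}.
\]
Monotonicity of $f_n'$ on $[x-h,x+h]$ then yields $f_n'(\eta_n)\leq f_n'(x)\leq f_n'(\xi_n)$, so that
\[
\frac{f_n(x)-f_n(x-\delta)}{\delta}\;\leq\;f_n'(x)\;\leq\;\frac{f_n(x+\delta)-f_n(x)}{\delta}.
\]

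Now, with $\delta$ still fixed, let $n\to\infty$. By hypothesis (i) applied at the three points $x-\delta$, $x$, $x+\delta$,
\[
\frac{f(x)-f(x-\delta)}{\delta}\;\leq\;\liminf_{n\to\infty}f_n'(x)\;\leq\;\limsup_{n\to\infty}f_n'(x)\;\leq\;\frac{f(x+\delta)-f(x)}{\delta}.
\]
Finally send $\delta\downarrow0$: by the differentiability of $f$ at $x$ (again hypothesis (i)), both extreme expressions converge to $f'(x)$, and the sandwich forces $\lim_{n\to\infty}f_n'(x)=f'(x)$. There is no real obstacle here; the content of the lemma is simply that \emph{monotonicity of $f_n'$ in a neighborhood of $x$ is strong enough to promote pointwise convergence of the $f_n$ at the three points $x-\delta, x, x+\delta$ into pointwise convergence of the derivatives at $x$}, and differentiability of the limit $f$ then closes the squeeze.
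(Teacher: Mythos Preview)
Your proof is correct and follows essentially the same approach as the paper: sandwich $f_n'(x)$ between the one-sided difference quotients of $f_n$ using monotonicity of $f_n'$, pass to the limit in $n$ via (i), and then let the step size go to zero using differentiability of $f$. The only differences are cosmetic --- you invoke the mean value theorem explicitly and introduce a separate parameter $\delta\in(0,h)$ rather than reusing the symbol $h$, which is arguably cleaner.
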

\begin{proof}
First, suppose that $\frac{\dint^2}{\dint x'^2}f_n(x')\geq0$ for all $x'\in[x-h,x+h]$, all $n\geq1$ and some $h>0$. Then, for $h>0$ sufficiently small and all $n\geq1$,
\eq
\frac{f_n(x-h)-f_n(x)}{-h} \leq \frac{\dint}{\dint x}f_n(x) \leq \frac{f_n(x+h)-f_n(x)}{h},
\en
and taking $n\rightarrow\infty$ we get, by assumption~(i), that
\eq
\frac{f(x-h)-f(x)}{-h} \leq \liminf_{n\rightarrow\infty} \frac{\dint}{\dint x}f_n(x) \leq \limsup_{n\rightarrow\infty} \frac{\dint}{\dint x}f_n(x) \leq \frac{f(x+h)-f(x)}{h}.
\en
Taking $h\downarrow 0$ now proves the result. The proof for $\frac{\dint^2}{\dint x^2}f_n(x)\leq0$ is similar.
\end{proof}
We are now ready to prove Theorem~\ref{thm-thermqphi}.
\begin{proof}[Proof of Theorem~\ref{thm-thermqphi}]
We apply Lemma~\ref{lem-interchange} with the role of $f_n$ taken by $B\mapsto \psi_n(\beta,B)$, since
\eq
M_n(\beta,B)=\frac{1}{n} \sum_{i\in[n]} \left<\sigma_i\right>_{\mu_n}=\frac{\partial}{\partial B} \psi_n(\beta,B),
\en
and $\lim_{n\rightarrow\infty}\psi_n(\beta,B)=\varphi(\beta,B)$ by Theorem~\ref{thm-pressure} and $B\mapsto M_n(\beta,B)$ is non-decreasing by the GKS inequality. Therefore,
\eq
\lim_{n\rightarrow\infty}M_n(\beta,B)=\lim_{n\rightarrow\infty}\frac{\partial}{\partial B} \psi_n(\beta,B)=\frac{\partial}{\partial B} \varphi(\beta,B),
\en
which proves part~(a).

Part~(b) follows immediately from Proposition~\ref{prop-dpressure} and the observation that
\eq
U_n = -\frac{1}{n}\sum_{(i,j)\in E_n} \big<\sigma_i \sigma_j\big>_{\mu_n} = -\frac{\partial}{\partial \beta} \psi_n(\beta,B).
\en

Part~(c) is proved using Lemma~\ref{lem-interchange} by combining part~(a) of this theorem and that $B\mapsto \frac{\partial}{\partial B} M_n(\beta,B)$ is non-increasing by the GHS inequality.
\end{proof}

We can now prove each of the statements in Corollary~\ref{cor-thermq} by taking the proper derivative of $\varphi(\beta,B)$.
\begin{proof}[Proof of Corollary~\ref{cor-thermq}]
It follows from Theorem~\ref{thm-thermqphi}~(a) that the magnetization per vertex is given by
\eq
M(\beta,B) = \frac{\partial}{\partial B} \varphi(\beta, B).
\en
Similar to the proof of Lemma~\ref{lem-dphi}, we can ignore the dependence of $h^*$ on $B$ when differentiating $\varphi(\beta,B)$ with respect to $B$ by Proposition~\ref{prop-phiindeph}. Therefore, with $\hat{\beta}=\tanh(\beta)$,
\begin{align}
\frac{\partial}{\partial B} \varphi(\beta, B) &=\frac{\partial}{\partial B} \expec\left[\log\left(e^B \prod_{i=1}^{L} \{1+\tanh(\beta)\tanh(h_i)\} + e^{-B} \prod_{i=1}^{L} \{1-\tanh(\beta)\tanh(h_i)\} \right)\right] \nn\\
&= \expec\left[\frac{e^B \prod_{i=1}^L (1+\hat{\beta} \tanh(h_i)) - e^{-B} \prod_{i=1}^L (1-\hat{\beta} \tanh(h_i))}{e^B \prod_{i=1}^L (1+\hat{\beta} \tanh(h_i)) + e^{-B} \prod_{i=1}^L (1-\hat{\beta} \tanh(h_i))}\right] \nn\\
&= \expec\left[
\frac{e^B \prod_{i=1}^L \left(\frac{1+\hat{\beta} \tanh(h_i)}{1-\hat{\beta} \tanh(h_i)}\right)^{1/2}
- e^{-B} \prod_{i=1}^L \left(\frac{1-\hat{\beta} \tanh(h_i)}{1+\hat{\beta} \tanh(h_i)}\right)^{1/2}}
{e^B \prod_{i=1}^L \left(\frac{1+\hat{\beta} \tanh(h_i)}{1-\hat{\beta} \tanh(h_i)}\right)^{1/2}
+ e^{-B} \prod_{i=1}^L \left(\frac{1-\hat{\beta} \tanh(h_i)}{1+\hat{\beta} \tanh(h_i)}\right)^{1/2}}\right],
\end{align}
where $L$ has distribution $P$ and $\{h_i\}_{i\geq1}$'s are i.i.d.\ copies of $h^*$, independent of $L$. Using that $\atanh(x)=\frac{1}{2} \log\left(\frac{1+x}{1-x}\right)$ the above simplifies to
\eq
\expec\left[\frac{e^B \prod_{i=1}^L e^{\atanh(\hat{\beta} \tanh(h_i))} - e^{-B} \prod_{i=1}^L e^{-\atanh(\hat{\beta} \tanh(h_i))}}{e^B \prod_{i=1}^L e^{\atanh(\hat{\beta} \tanh(h_i))} + e^{-B} \prod_{i=1}^L e^{-\atanh(\hat{\beta} \tanh(h_i))}}\right]
=\expec\left[ \tanh\left(B + \sum_{i=1}^L \atanh(\hat{\beta} \tanh(h_i))\right)\right].
\en
By Lemma~\ref{lem-treepruning}, this indeed equals $\expec\left[\big<\sigma_0\big>_{\nu_{L+1}}\right]$, where $\nu_{L+1}$ is given in~\eqref{eq-defnuL1}, which proves part~(a).

Part~(b) immediately follows from Theorem~\ref{thm-thermqphi}(b) and Lemma~\ref{lem-dphi}.
\end{proof}

\paragraph*{Acknowledgements.}
The work of RvdH and SD is supported in part by The Netherlands Organisation for
Scientific Research (NWO). CG has been working at the Eindhoven University of Technology (TU/e) during the execution of this work; he acknowledges the support of the Department of Mathematics and Computer Science of the TU/e.

\end{document}